\newtheorem{thm}{Theorem}[section]
\newtheorem{lem}[thm]{Lemma}
\newtheorem{prop}[thm]{Proposition}
\newtheorem{defn}[thm]{Definition}
\newtheorem{rem}[thm]{Remark}
\newtheorem{ex}[thm]{Example}
\begin{document}
\title{A characterization of nilpotent Leibniz algebras}
\author{Alice Fialowski, A.Kh. Khudoyberdiyev and B.A. Omirov}
\address{[A. Fialowski] Institute of Mathematics, E\"otv\"os Lor\'and
University, Budapest, P\'azm\'any P\'eter s\'et\'any 1/C, 1117
Hungary}
\email{fialowsk@cs.elte.hu}
\address{[A.Kh. Khudoyberdiyev -- B.A. Omirov] Institute of Mathematics, 29,
Do'rmon yo'li srt., 100125, Tashkent (Uzbekistan)}
\email{khabror@mail.ru
--- omirovb@mail.ru}
\thanks{The research of the first author was partially supported by the
grant OTKA K77757.}

\maketitle

\begin{abstract}
W. A. Moens proved that a Lie algebra is nilpotent if and only if
it admits an invertible Leibniz-derivation. In this paper we show
that with the definition of Leibniz-derivation from \cite{Moens}
the similar result for non Lie Leibniz algebras is not true.
Namely, we give an example of non nilpotent Leibniz algebra which
admits an invertible Leibniz-derivation. In order to extend the
results of paper \cite{Moens} for Leibniz algebras we introduce a
definition of Leibniz-derivation of Leibniz algebras which agrees
with Leibniz-derivation of Lie algebras case. Further we prove
that a Leibniz algebra is nilpotent if and only if it admits an
invertible Leibniz-derivation. Moreover, the result that solvable
radical of a Lie algebra is invariant with respect to a
Leibniz-derivation was extended to the case of Leibniz algebras.
\end{abstract} \maketitle

\textbf{Mathematics Subject Classification 2000}: 17A32, 17B30.

\textbf{Key Words and Phrases}: Lie algebra, Leibniz algebra,
derivation, Leibniz-derivation, solvability, nilpotency.


\section{Introduction}

In 1955, Jacobson \cite{Jac} proved that a Lie algebra over a
field of characteristic zero admitting a non-singular (invertible)
derivation is nilpotent. The problem, whether the inverse of this
statement is correct, remained open until work \cite{Dix}, where
an example of an nilpotent Lie algebra, whose derivations are
nilpotent (and hence, singular), was constructed. Such types of
Lie algebras are called characteristically nilpotent Lie algebras.

The study of derivations of Lie algebras lead to appearance of
natural generalization -- pre-derivations of Lie algebras
\cite{Mul}. In \cite{Baj} it is proved that Jacobson's result is
also true in terms of pre-derivations. Similar to the example of
Dixmier and Lister \cite{Dix} several examples of nilpotent Lie
algebras, whose pre-derivations are nilpotent were presented in
\cite{Baj}, \cite{Burd}. Such Lie algebras are called strongly
nilpotent \cite{Burd}.

In paper \cite{Moens} a generalized notion of derivations and
pre-derivation of Lie algebras is defined as Leibniz-derivation of
order $k$. In fact, a Leibniz-derivation is a derivation of a
Leibniz $k$-algebra constructed by Lie algebra \cite{Cas1}.

Below we present the characterization of nilpotency for Lie
algebras in terms of Leibniz-derivations.

\begin{thm}\cite{Moens} \label{t5} A Lie algebra over a field of characteristic zero is nilpotent
if and only if it has an invertible Leibniz-derivation.
\end{thm}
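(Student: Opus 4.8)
We sketch proofs of the two implications. \emph{If $L$ is nilpotent, it has an invertible Leibniz-derivation.} Say $L^{c+1}=0$, where $L^1=L$ and $L^{i+1}=[L,L^i]$, and choose any integer $k\ge c+1$. The $k$-ary bracket attached to $L$ (say, $[x_1,\dots,x_k]:=[[\cdots[x_1,x_2],\cdots],x_k]$) takes values in $L^k\subseteq L^{c+1}=0$, hence vanishes identically; therefore the defining identity of a Leibniz-derivation of order $k$,
\[
D[x_1,\dots,x_k]=\sum_{i=1}^k[x_1,\dots,x_{i-1},Dx_i,x_{i+1},\dots,x_k],
\]
reduces to $0=0$ and holds for \emph{every} linear operator $D$ on $L$. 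In particular $\mathrm{id}_L$ is an invertible Leibniz-derivation of order $k$.

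\emph{If $L$ has an invertible Leibniz-derivation, it is nilpotent.} Let $D$ be an invertible Leibniz-derivation of order $k$. Since nilpotency is unaffected by extension of scalars, we may work over an algebraically closed field. Replacing $D$ by its semisimple part --- which is again an invertible Leibniz-derivation of order $k$, because the $k$-ary bracket, viewed as a linear map $L^{\otimes k}\to L$, intertwines $\sum_i \mathrm{id}^{\otimes(i-1)}\otimes D\otimes\mathrm{id}^{\otimes(k-i)}$ with $D$, and intertwiners respect Jordan decompositions --- we may assume $D$ is semisimple. Let $L=\bigoplus_{\alpha\in\Delta}L_\alpha$ be its eigenspace decomposition; invertibility gives $0\notin\Delta$. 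Substituting homogeneous elements into the derivation identity and comparing $D$-eigenvalues yields
\[
[L_{\alpha_1},\dots,L_{\alpha_k}]\subseteq L_{\alpha_1+\cdots+\alpha_k}\qquad(\alpha_1,\dots,\alpha_k\in\Delta),
\]
so $L$ carries a ``$k$-ary grading'' with trivial $0$-component.

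From this I would deduce nilpotency in three steps. First, for homogeneous $x\in L_\alpha$ (so $\alpha\neq0$) one has $[y,x,\dots,x]=(\mathrm{ad}_x)^{k-1}(y)\in L_{\beta+(k-1)\alpha}$ for $y\in L_\beta$; iterating, $(\mathrm{ad}_x)^{m(k-1)}$ sends $L_\beta$ into $L_{\beta+m(k-1)\alpha}$, which is $0$ once $m$ is large, since $\Delta$ is finite and $(k-1)\alpha\neq0$. Hence $\mathrm{ad}_x$ is nilpotent for every homogeneous $x$, so $L$ is spanned by $\mathrm{ad}$-nilpotent elements. Second, I would reduce to the solvable case: the solvable radical of a Lie algebra is invariant under Leibniz-derivations (cf.\ \cite{Moens}), so $D$ induces an invertible Leibniz-derivation of order $k$ on $L/\mathrm{rad}(L)$; since a semisimple Lie algebra admits no invertible Leibniz-derivation (each such derivation is inner, and $\mathrm{ad}_z$ annihilates $z$), we get $L/\mathrm{rad}(L)=0$, i.e.\ $L$ is solvable. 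Finally, for solvable $L$ I would triangularize the adjoint representation over the algebraically closed field (Lie's theorem); the diagonal weights $\lambda_1,\dots,\lambda_{\dim L}$ are linear functionals on $L$ that vanish on every homogeneous $x$ (because $\mathrm{ad}_x$ is nilpotent), hence vanish identically since homogeneous elements span $L$. Thus every $\mathrm{ad}_z$ is nilpotent and $L$ is nilpotent by Engel's theorem.

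The ``only if'' direction and the passage to a semisimple $D$ with a $k$-ary grading are routine. The main obstacle is that a $k$-ary grading is strictly weaker than an ordinary $\mathbb Z$-grading --- in particular one does \emph{not} obtain $[L_\alpha,L_\beta]\subseteq L_{\alpha+\beta}$ --- so Jacobson's classical argument cannot be transcribed directly. The elementary computation with $(\mathrm{ad}_x)^{k-1}$ recovers only that homogeneous elements are $\mathrm{ad}$-nilpotent, and bridging the remaining gap to full nilpotency is precisely where one must invoke the invariance of the solvable radical under Leibniz-derivations together with the absence of invertible Leibniz-derivations on semisimple Lie algebras.
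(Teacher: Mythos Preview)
Theorem~\ref{t5} is quoted from \cite{Moens} without proof in this paper; the relevant comparison is with the paper's own Leibniz-algebra analogue (Proposition~\ref{p5} and Theorem~\ref{t6}), which specialises to the Lie case. Your argument is correct but differs from the paper's in both directions. For the forward implication you take the order $k$ at least the nilindex so the $k$-ary bracket vanishes and $\mathrm{id}_L$ works trivially; Proposition~\ref{p5} instead builds an invertible Leibniz-derivation of the smaller order $q=[\tfrac{s}{2}]+1$ (acting as $\mathrm{id}$ on a complement of $L^q$ and as $q\cdot\mathrm{id}$ on $L^q$), so your shortcut is valid but gives no control on the order. For the converse, both arguments first show $\mathrm{ad}_x$ is nilpotent for every $D$-homogeneous $x$ --- you via semisimple replacement of $D$, the paper via generalised eigenspaces and Lemma~\ref{l5}; these are equivalent. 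The substantive divergence is the bridge from ``homogeneous $x$'' to ``all $x$'': you reduce to the solvable case using invariance of the radical and the absence of invertible Leibniz-derivations on semisimple Lie algebras (both imported from \cite{Moens}), then finish with Lie's theorem and the linearity of the diagonal weights; the paper instead argues that $\{R_x:x\ \text{homogeneous}\}$ is a weakly closed subset of $\mathrm{End}(L)$ (via the identity $R_xR_y-R_yR_x=R_{[y,x]}$ together with Lemma~\ref{l5}) and applies Jacobson's theorem on weakly closed nilpotent sets (Theorem~\ref{closed}) to obtain nilpotence of every $R_x$ directly before invoking Engel. Your route leans on Lie-specific structure theory, whereas the weakly-closed-set device is exactly what allows the paper to extend the result to Leibniz algebras.
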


Leibniz algebras were introduced by Loday in \cite{Lo 1}-\cite{Lo
2} as a non-antisymmetric version of Lie algebras. Many results of
Lie algebras are extended to Leibniz algebras case. Since the
study of derivations and automorphisms of a Lie algebra plays
essential role in the structure theory, the natural question
arises whether the corresponding results for Lie algebras can be
extended to more general objects.

In \cite{Lad} it is proved that a finite dimensional complex
Leibniz algebra admitting a non-singular derivation is nilpotent.
Moreover, it was shown that similarly to the case of Lie algebras,
the inverse of this statement does not hold and the notion of
characteristically nilpotent Lie algebra can be extended for
Leibniz algebras \cite{Omir}.
\medskip

In this paper we show that if we define Leibniz-derivations for
Leibniz algebra as in \cite{Moens}, then Theorem \ref{t5} does not hold.
In order to avoid the confusion we need to modify the notion
of Leibniz-derivation for Leibniz algebras.

Recall, in the definition of Leibniz-derivation of order $k$ for
Lie algebras the $k$-ary bracket is defined as multiplication of
$k$ elements on the left side. For the case of Leibniz algebras we
propose the definition of Leibniz-derivation of order $k$ as
$k$-ary bracket on the right side. Due to anti-commutativity of
multiplication in Lie algebras this definition agrees with the
case of Lie algebras.

Note that a vector space equipped with right sided $k$-ary
multiplication is not a Leibniz $k$-algebra defined in
\cite{Cas1}. For Leibniz-derivation of Leibniz algebra we prove
the analogue of Theorem \ref{t5} for finite dimensional Leibniz
algebras over a field of characteristic zero.

Through the paper all spaces an algebras are assumed finite
dimensional.

\section{Preliminaries}
In this section we present some known facts
about Leibniz algebras and Leibniz $n$-algebras.

\begin{defn}
A vector space $L$ over a field $F$ with a binary operation
$[-,-]$ is a (right) Leibniz algebra, if for any $x, y, z \in L$
the so-called Leibniz identity
  $$
[x, [y, z]] = [[x, y], z] - [[x, z], y]
$$
 holds.
\end{defn}

Every Lie algebra is a Leibniz algebra, but the bracket in a
Leibniz algebra needs not to be skew-symmetric.

For a Leibniz algebra $L$ consider the following central lower and
derived sequences:
$$
L^1=L,\quad L^{k+1}=[L^k,L^1], \quad k \geq 1,
$$
$$L^{[1]} = 1, \quad L^{[s+1]} = [L^{[s]}, L^{[s]}], \quad s \geq 1.$$

\begin{defn} A Leibniz algebra $L$ is called
nilpotent (respectively, solvable), if there exists  $p\in\mathbb N$ $(q\in
\mathbb N)$ such that $L^p=0$ (respectively, $L^{[q]}=0$).
\end{defn}

Levi's theorem, which has been proved for left Leibniz
algebras in \cite{Bar}, is also true for right Leibniz algebras.

\begin{thm} (Levi's Theorem).\label{tbar}
Let $L$ be a Leibniz algebra over a field of characteristic zero
and $R$ be its solvable radical. Then there exists a semisimple
subalgebra Lie $S$ of $L$, such that $L=S\dot{+}R.$
\end{thm}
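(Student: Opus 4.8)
The plan is to deduce the right-handed statement from the left-handed version of Levi's theorem proved in \cite{Bar} by passing to the opposite algebra. Given a right Leibniz algebra $(L,[-,-])$, equip the same underlying vector space with the new bracket $\{x,y\}:=[y,x]$ and call the resulting algebra $L^{op}$. The first step is to check that $L^{op}$ is a \emph{left} Leibniz algebra: writing out the left Leibniz identity $\{x,\{y,z\}\}=\{\{x,y\},z\}+\{y,\{x,z\}\}$ in terms of $[-,-]$ turns it into $[[z,y],x]=[z,[y,x]]+[[z,x],y]$, which is exactly the (right) Leibniz identity of $L$ applied to the triple $(z,y,x)$. This is a one-line computation.

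The second step is to observe that the relevant structure is insensitive to this switch. A subspace of $L$ is an ideal of $L^{op}$ if and only if it is an ideal of $L$, since $\{L,I\}=[I,L]$ and $\{I,L\}=[L,I]$; and for any subalgebra $A$ one has $\{A,A\}=[A,A]$, so the derived series $A^{[s]}$ is literally the same whether computed in $L$ or in $L^{op}$. Hence $L$ and $L^{op}$ have the same solvable ideals, and in particular the same solvable radical $R$.

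The third step applies \cite{Bar} to $L^{op}$, yielding a semisimple Lie subalgebra $S$ of $L^{op}$ with $L^{op}=S\dot{+}R$ as vector spaces. It remains to transport $S$ back to $L$. If $x,y\in S$, then $[x,y]=\{y,x\}\in S$, so $S$ is a subalgebra of $L$ as well; moreover the map $x\mapsto -x$ is an isomorphism from $(S,\{-,-\})$ onto $(S,[-,-])$ (using that $\{-,-\}$ is antisymmetric on the Lie algebra $S$), so $(S,[-,-])$ is again a semisimple Lie algebra. Since the underlying vector-space decomposition is unchanged, $L=S\dot{+}R$ is the required Levi decomposition of $L$.

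None of these steps is difficult; the only thing needing care is the bookkeeping in the first two steps — making sure the sign and order conventions for "left Leibniz algebra" in \cite{Bar} are precisely the ones matched by the opposite-algebra construction, and that semisimplicity (trivial solvable radical together with a semisimple Lie quotient) is visibly preserved by the isomorphism $(S,\{-,-\})\cong(S,[-,-])$. I anticipate no genuine obstacle beyond this.
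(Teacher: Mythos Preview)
Your argument is correct, and it is essentially the same route the paper takes, only more explicit: the paper does not prove this theorem at all but simply cites \cite{Bar} for the left-Leibniz case and asserts that the statement ``is also true for right Leibniz algebras,'' leaving the passage from left to right to the reader. Your opposite-algebra construction is exactly the standard way to justify that passage, and your bookkeeping (same ideals, same derived series, hence same radical; the Levi complement transports via $x\mapsto -x$) is sound.
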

\medskip

The following theorem from linear algebra characterizes the
decomposition of a vector space into the direct sum of characteristic
subspaces.

\begin{thm}\cite{Cur} Let $A$ be a linear transformation
of the vector space $V.$ Then $V$ decomposes into the direct sum of
characteristic subspaces $V=V_{\lambda_1}\oplus V_{\lambda_2}
\oplus \dots \oplus V_{\lambda_k}$  with respect to $A,$ where
$V_{\lambda_i}=\{x\in V\ | \ (A-\lambda_i I)^k(x)=0 \textrm{ for
some } k\in \mathbb{N}\}$ and $\lambda_i, 1\leq i \leq k$, are
eigenvalues of $A.$
\end{thm}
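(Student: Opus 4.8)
This is the classical primary (generalized eigenspace) decomposition, so the plan is to deduce it from the Cayley--Hamilton theorem together with B\'ezout's identity in the polynomial ring $F[t]$. First I would denote by $m(t)$ the minimal polynomial of $A$ and observe that the hypothesis — that the $\lambda_i$ are \emph{all} the eigenvalues of $A$ and that the characteristic subspaces are to fill up $V$ — is exactly the requirement that $m(t)$ split over $F$; this is automatic when $F$ is algebraically closed, in particular over $\mathbf{C}$. So write $m(t)=\prod_{i=1}^{k}(t-\lambda_i)^{n_i}$ with the $\lambda_i$ pairwise distinct, and put $p_i(t)=\prod_{j\ne i}(t-\lambda_j)^{n_j}=m(t)/(t-\lambda_i)^{n_i}$. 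The polynomials $p_1,\dots,p_k$ have no common zero, hence $\gcd(p_1,\dots,p_k)=1$, and B\'ezout's identity supplies $u_i(t)\in F[t]$ with $\sum_{i=1}^{k}u_i(t)p_i(t)=1$. (One could instead argue by induction on $k$, peeling off one generalized eigenspace at a time, but treating all the $\lambda_i$ simultaneously via B\'ezout is cleaner.)

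Next I would set $\pi_i:=u_i(A)p_i(A)$ and verify the projection relations. Evaluating the B\'ezout identity at $A$ gives $\sum_i\pi_i=I$. For $i\ne j$ the product $p_i(t)p_j(t)$ is divisible by $m(t)$, and $m(A)=0$ by Cayley--Hamilton, so $\pi_i\pi_j=0$; multiplying $\sum_i\pi_i=I$ by $\pi_j$ then gives $\pi_j^2=\pi_j$. Thus the $\pi_i$ are mutually orthogonal projections summing to the identity, so $V=\pi_1(V)\oplus\dots\oplus\pi_k(V)$. I would then identify $\pi_i(V)$ with $W_i:=\ker(A-\lambda_i I)^{n_i}$: the inclusion $\pi_i(V)\subseteq W_i$ follows from $(A-\lambda_i I)^{n_i}\pi_i=u_i(A)m(A)=0$, and conversely if $x\in W_i$ then for every $j\ne i$ the factor $(t-\lambda_i)^{n_i}$ divides $p_j(t)$, so $p_j(A)x=0$, hence $\pi_j x=0$ and $x=\sum_\ell\pi_\ell x=\pi_i x\in\pi_i(V)$. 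Finally, to match the formulation in the statement, I would check $V_{\lambda_i}=W_i$: the inclusion $W_i\subseteq V_{\lambda_i}$ is trivial, and for the reverse, writing $x\in V_{\lambda_i}$ as $x=\sum_\ell w_\ell$ with $w_\ell\in W_\ell$ and applying $(A-\lambda_i I)^N$ (which annihilates $x$ for suitable $N$) forces, by the directness of the sum and the invertibility of $A-\lambda_i I$ on each $W_\ell$ with $\ell\ne i$, that $w_\ell=0$ for $\ell\ne i$, i.e. $x=w_i\in W_i$.

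For this classical statement there is no deep obstacle; the only points requiring care are the polynomial bookkeeping — that $\gcd(p_1,\dots,p_k)=1$ even though no proper subfamily of the $p_i$ need be coprime, and that the divisibilities $m(t)\mid p_i(t)p_j(t)$ and $(t-\lambda_i)^{n_i}\mid p_j(t)$ are invoked in the right places — together with the last normalization showing that enlarging the exponent in the defining condition for $V_{\lambda_i}$ beyond $n_i$ does not enlarge the subspace. Everything else (Cayley--Hamilton, B\'ezout in $F[t]$, and the elementary kernel and invariance computations) is standard, and the argument is purely formal once the splitting of $m(t)$ over $F$ is granted; without that hypothesis the sum of the characteristic subspaces is a proper $A$-invariant subspace of $V$ and the theorem is false, which is why ``the $\lambda_i$ are the eigenvalues of $A$'' must be read as ``$m(t)$ splits over $F$''.
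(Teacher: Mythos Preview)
Your argument is the standard primary decomposition proof and is correct; the only quibble is that $m(A)=0$ holds by the \emph{definition} of the minimal polynomial rather than by Cayley--Hamilton (the latter would be needed had you worked with the characteristic polynomial instead, which would also work). As for comparison: the paper does not prove this theorem at all --- it is merely quoted from the cited textbook \cite{Cur} as background linear algebra --- so there is no ``paper's proof'' to compare against, and your self-contained B\'ezout/projection argument is exactly what one would find in a standard reference.
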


In Leibniz algebras a derivation is defined as follows

\begin{defn}
A linear transformation $d$ of a Leibniz algebra $L$ is a
derivation if for any $x, y\in L$
$$d([x,y])=[d(x),y]+[x, d(y)].$$
\end{defn}
\smallskip

Consider for an arbitrary element $x\in L$ the operator of
right multiplication $R_x:L\to L$, defined by $R_x(z)=[z,x].$
Operators of right multiplication are derivations of the Leibniz algebra
$L.$ The set $R(L)=\{R_x\ |\ x\in L\}$ is a Lie algebra with
respect to the commutator, and the following identity holds:
$$R_xR_y-R_yR_x=R_{[y,x]}.\eqno (2.1)$$

A subset $S$ of an associative algebra $A$ over a field $F$ is
called a {\it weakly closed subset} if for every pair $(a,b)\in
S\times S$ there is an element $\gamma_{(a,b)}\in F$ such that
$ab+\gamma_{(a,b)}ba\in S.$

 We will need the following result concerning weekly closed sets

\begin{thm}\cite{Jac} \label{closed} Let $S$ be a weakly closed subset
of the associative algebra $A$ of linear transformations of a
vector space $V$ over $F.$ Assume that every $W\in S$ is
nilpotent, that is, $W^k=0$ for some positive integer $k.$ Then
the enveloping associative algebra $S^*$ of $S$ is nilpotent.
\end{thm}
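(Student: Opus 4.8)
The plan is to prove the statement by induction on $\dim V$, organising the argument around an Engel-type theorem for weakly closed sets. Two elementary observations will be used repeatedly. First, the image of a weakly closed set under an algebra homomorphism $\varphi$ is again weakly closed, since $ab+\gamma_{(a,b)}ba\in S$ gives $\varphi(a)\varphi(b)+\gamma_{(a,b)}\varphi(b)\varphi(a)=\varphi\!\left(ab+\gamma_{(a,b)}ba\right)\in\varphi(S)$. Second, nilpotency of a linear transformation is inherited both by its restriction to an invariant subspace and by the transformation it induces on the corresponding quotient space. Consequently, if $U$ is a subspace of $V$ invariant under every element of $S$, the set $S|_{U}$ of restrictions and the set $\bar S$ of induced transformations on $V/U$ are again weakly closed and consist of nilpotent transformations, and when $\{0\}\neq U\neq V$ they act on spaces of strictly smaller dimension. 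Finally recall that $S^{*}$ is spanned by the products of elements of $S$, so $S^{*}$ is nilpotent precisely when all sufficiently long such products vanish.

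The inductive step then runs as follows. If $\dim V\le 1$, every nilpotent transformation of $V$ is zero, so $S\subseteq\{0\}$ and $S^{*}=0$. Assume $\dim V\ge 2$, that the theorem holds in all smaller dimensions, and that $V$ contains a subspace $U$ with $\{0\}\neq U\neq V$ invariant under every element of $S$. Applying the induction hypothesis to $S|_{U}$ and to $\bar S$ yields integers $p$ and $q$ such that every product of $q$ elements of $S$ annihilates $U$, while every product of $p$ elements of $S$ carries $V$ into $U$; hence every product of $p+q$ elements of $S$ annihilates $V$, and $S^{*}$ is nilpotent. Thus the whole theorem reduces to producing such an invariant subspace — equivalently, since $V\neq 0$, to producing a nonzero vector $z$ with $Tz=0$ for every $T\in S$ (for then $Fz$ is invariant and, when $\dim V\ge 2$, proper, while $\dim V=1$ has already been disposed of).

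This last point — that a weakly closed set of nilpotent transformations of a nonzero space always has a common null vector — is the heart of the matter and the step I expect to be the main obstacle; it is the analogue for weakly closed sets of Engel's theorem. I would prove it by adapting Engel's inductive argument, this time inducing on the dimension of the enveloping algebra $S^{*}$: the case $S^{*}=0$ is trivial, and otherwise one selects a weakly closed subset $S_{0}$ of $S$ whose enveloping algebra $S_{0}^{*}$ is a proper subalgebra of $S^{*}$ of the largest possible dimension, applies the induction hypothesis to conclude that the common null space $U_{0}=\set{v\in V\mid S_{0}v=0}$ is nonzero, and then — this is where the weak-closure relations do the real work — shows that $U_{0}$ is invariant under a weakly closed subset $S_{1}$ of $S$ with $S_{0}^{*}$ properly contained in $S_{1}^{*}$, so that the maximal choice of $S_{0}$ forces $S_{1}$ to generate all of $S^{*}$; a final use of the nilpotency of the elements of $S$ on the subspace $U_{0}$ then yields a vector annihilated by all of $S$. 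What makes this genuinely harder than its Lie-algebra counterpart is that weakly closed sets need not be closed under intersection and the structure constants $\gamma_{(a,b)}$ must be tracked carefully through each of these steps — which is exactly the bookkeeping carried out in \cite{Jac}, whose proof I would follow, and which moreover produces an explicit bound $N=N(\dim V)$ with every product of $N$ elements of $S$ equal to zero.
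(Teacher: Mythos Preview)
The paper does not give its own proof of this theorem: it is quoted verbatim from Jacobson \cite{Jac} and used as a black box in the proof of Theorem~\ref{t6}. So there is no ``paper's proof'' to compare your attempt against; any comparison is really with Jacobson's original argument.

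Your outline is faithful to that argument. The reduction step --- induct on $\dim V$, find a nontrivial invariant subspace, and bound the nilpotency index by summing the indices on the subspace and the quotient --- is correct and standard, as are the preliminary remarks on why weak closure and nilpotency pass to restrictions and quotients. You correctly identify the Engel-type common-null-vector lemma as the crux, and your sketch of its proof (induct on $\dim S^{*}$, pick a maximal weakly closed subset $S_0$ with $S_0^{*}\subsetneq S^{*}$, show its null space is nonzero and is stabilised by a strictly larger weakly closed set) is the shape of Jacobson's proof. Where you are honest about hand-waving --- the construction of $S_1$ from $S_0$ and the verification that the $\gamma_{(a,b)}$ cooperate --- is indeed where the real work lies, and you explicitly defer it to \cite{Jac}. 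That is exactly what the paper itself does, so there is nothing to fault here beyond noting that your write-up is a proof \emph{sketch} rather than a proof: the maximality argument for $S_0$ and the passage from $S_0$ to $S_1$ would need to be written out in full for the argument to stand on its own.
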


The classical Engel's theorem for Lie algebras has the following analogue
for Leibniz algebras.

\begin{thm}\cite{Ayu}\label{t1}
A Leibniz algebra $L$ is nilpotent if and only if $R_x$ is nilpotent for any
$x \in L.$
\end{thm}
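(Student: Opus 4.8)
The plan is to prove the two implications separately, getting the easy direction straight from the definition of the lower central series and the harder direction from the weakly closed set result, Theorem \ref{closed}.

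For the ``only if'' part, suppose $L$ is nilpotent, say $L^p=0$. I would first check, by induction on $k$, that
$R_{x_k}R_{x_{k-1}}\cdots R_{x_1}(z)=[[\cdots[[z,x_1],x_2],\cdots],x_k]\in L^{k+1}$
for all $z,x_1,\dots,x_k\in L$: the base case is $[z,x_1]\in[L,L]=L^2$, and the inductive step uses $L^{k+1}=[L^k,L^1]$. Taking $k=p-1$ gives $R_x^{\,p-1}=0$ for every $x\in L$, so each $R_x$ is nilpotent.

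For the ``if'' part, assume every $R_x$ is nilpotent and consider the set $S=\{R_x\ |\ x\in L\}$ of linear transformations of $L$ inside the associative algebra $\mathrm{End}(L)$. The crucial observation is that $S$ is a weakly closed subset: by identity $(2.1)$, for any pair $(R_a,R_b)$ one has $R_aR_b-R_bR_a=R_{[b,a]}\in S$, so the constant $\gamma_{(R_a,R_b)}=-1$ works. Since by hypothesis every element of $S$ is a nilpotent transformation, Theorem \ref{closed} applies and shows that the enveloping associative algebra $S^*$ is nilpotent; hence there is an integer $m$ with $R_{x_1}R_{x_2}\cdots R_{x_m}=0$ for all $x_1,\dots,x_m\in L$.

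It remains to translate this into nilpotency of $L$. Again by induction on $k$, using $L^{k+1}=[L^k,L^1]$ at each stage, one sees that $L^{k+1}$ is spanned by the elements $R_{x_k}\cdots R_{x_1}(z)$ with $z,x_1,\dots,x_k\in L$. Taking $k=m$ and invoking the vanishing of all length-$m$ products from $S$ yields $L^{m+1}=0$, so $L$ is nilpotent. The main obstacle is the ``if'' direction, and within it the key point is recognizing that identity $(2.1)$ is exactly what makes $S$ weakly closed, so that Theorem \ref{closed} can be fed; everything else is routine bookkeeping with the lower central series.
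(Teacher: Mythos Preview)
The paper does not give its own proof of this statement: Theorem \ref{t1} is simply quoted from \cite{Ayu} as the Leibniz analogue of Engel's theorem, with no argument supplied. So there is no ``paper's proof'' to compare against.

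That said, your argument is correct and fits the paper's toolkit perfectly. The forward direction is the obvious one. For the converse you use exactly the mechanism the authors themselves employ later in the proof of Theorem \ref{t6}: identity $(2.1)$ makes $R(L)$ a weakly closed set with $\gamma=-1$, and Theorem \ref{closed} then forces the enveloping associative algebra to be nilpotent, which immediately kills $L^{m+1}$. The only thing worth flagging is that Theorem \ref{closed} (Jacobson's weakly-closed-set theorem) is being applied to linear transformations of $L$, so you are implicitly using the blanket finite-dimensionality assumption stated at the end of the Introduction; this is fine in context but would be worth mentioning explicitly. Otherwise the bookkeeping with $L^{k+1}$ being spanned by iterated right multiplications is routine and accurate.
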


The following Theorem generalizes Jacobson's theorem to
Leibniz algebras.

\begin{thm}\cite{Lad}\label{t2}
Let $L$ be a complex Leibniz algebra which admits a non-singular
derivation. Then $L$ is nilpotent.
\end{thm}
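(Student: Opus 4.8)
The plan is to adapt Jacobson's original argument for Lie algebras, using the operators $R_x$ of right multiplication in place of the adjoint operators and reducing at the very end to Engel's theorem for Leibniz algebras (Theorem \ref{t1}): it suffices to prove that $R_x$ is nilpotent for every $x\in L$.

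First I would fix a non-singular derivation $d$ and record the commutation rule: from $d([z,x])=[d(z),x]+[z,d(x)]$ one reads off $dR_x-R_xd=R_{d(x)}$. Next, decompose $L$ into the characteristic subspaces of $d$, $L=L_{\lambda_1}\oplus\dots\oplus L_{\lambda_k}$, by the theorem of \cite{Cur}; since $d$ is non-singular, $0$ is not an eigenvalue, so every $\lambda_i\neq 0$. A short induction using the Leibniz identity and the derivation property then yields the grading $[L_{\lambda_i},L_{\lambda_j}]\subseteq L_{\lambda_i+\lambda_j}$, with the convention $L_\mu=0$ when $\mu$ is not an eigenvalue: concretely $(d-(\lambda_i+\lambda_j))^n[x,y]=\sum_{s=0}^{n}\binom{n}{s}[(d-\lambda_i)^sx,\,(d-\lambda_j)^{n-s}y]$, which vanishes as soon as $n$ exceeds the sum of the nilpotency indices of $x$ and $y$.

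Now fix $x\in L_{\lambda_i}$ for a single $i$. Since $R_x(L_{\lambda_j})\subseteq L_{\lambda_j+\lambda_i}$, iteration gives $R_x^m(L_{\lambda_j})\subseteq L_{\lambda_j+m\lambda_i}$; because $\lambda_i\neq 0$ the scalars $\lambda_j+m\lambda_i$ are pairwise distinct, hence all but finitely many of them are not eigenvalues of $d$, so one power $R_x^m$ annihilates every $L_{\lambda_j}$, i.e. $R_x$ is nilpotent. Set $S=\bigcup_{i=1}^{k}\{R_x : x\in L_{\lambda_i}\}$. By $(2.1)$ together with the grading, $R_xR_y-R_yR_x=R_{[y,x]}$ with $[y,x]\in L_{\lambda_i+\lambda_j}$ whenever $x\in L_{\lambda_i}$, $y\in L_{\lambda_j}$, so $S$ is a weakly closed subset (with $\gamma=-1$) of the associative algebra of linear operators on $L$, consisting of nilpotent operators. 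Jacobson's Theorem \ref{closed} then makes the enveloping algebra $S^{*}$ nilpotent: every product of some fixed number $N$ of elements of $S$ is zero. For arbitrary $x=x_1+\dots+x_k\in L$ with $x_i\in L_{\lambda_i}$ we have $R_x=R_{x_1}+\dots+R_{x_k}$, so $R_x^{N}$ is a sum of products of $N$ elements of $S$ and hence vanishes; thus every $R_x$ is nilpotent, and Theorem \ref{t1} gives that $L$ is nilpotent.

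The main obstacle is exactly this last reduction: knowing $R_x$ is nilpotent for every eigenvector $x$ does not by itself make $R_x$ nilpotent for a general $x$, since a sum of nilpotent operators need not be nilpotent. The device that overcomes it is the weakly-closed-set theorem, so the technical heart of the proof is the verification of its two hypotheses — weak closedness, which comes from identity $(2.1)$ and the eigenspace grading, and nilpotency of each generator $R_x$ with $x\in L_{\lambda_i}$, which comes from non-singularity of $d$ forcing the exponents $\lambda_j+m\lambda_i$ out of the spectrum of $d$.
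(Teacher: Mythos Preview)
Your proposal is correct. The paper does not itself prove Theorem~\ref{t2} (it is quoted from \cite{Lad}), but the paper's proof of the generalization, Theorem~\ref{t6}, follows exactly your outline: characteristic-space decomposition with respect to $d$, the grading $[L_\alpha,L_\beta]\subseteq L_{\alpha+\beta}$ (Lemma~\ref{l5} for $n=2$), nilpotency of each $R_x$ with $x$ in a single weight space via the shift $\lambda_j\mapsto\lambda_j+m\lambda_i$ leaving the spectrum, weak closedness of $\bigcup_i R(L_{\lambda_i})$ from identity~(2.1), Jacobson's Theorem~\ref{closed}, and finally Engel's Theorem~\ref{t1}.
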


The next example presents $n$-dimensional Leibniz algebra
possessing only nilpotent derivations.
\begin{ex}\label{excarnil} Let $L$ be an $n$-dimensional Leibniz algebra and let
$\{e_1, e_2, \dots , e_n\}$ be a basis of $L$ with the following
table of multiplication:
$$\left\{\begin{array}{ll}
[e_1,e_1]=e_{3},&  \\[1mm]
[e_i,e_1]=e_{i+1}, &  2\leq i \leq n-1,\\[1mm]
[e_1,e_2]= e_4, & \\[1mm]
[e_i,e_2]= e_{i+2},& 2\leq i \leq n-2,\\[1mm]
\end{array} \right.$$
(omitted products are equal to zero).

Using derivation property it is easy to see that every derivation
of  $L$ has the following matrix form:
$$\left(\begin{array}{cccccccc}
0& 0& a_3&a_4&a_5&\dots&a_{n-1}&a_n\\[1mm]
0& 0& a_3&a_4& a_5&\dots& a_{n-1}&b_n\\[1mm]
0& 0& 0& a_3& a_4& \dots & a_{n-2}&a_{n-1}\\[1mm]
\dots & \dots & \dots& \dots &\dots&\dots &\dots &\dots \\[1mm]
0& 0& 0& 0& 0 &\dots &0 &a_3\\[1mm]
0& 0& 0& 0& 0& \dots &0  &0 \\[1mm]
\end{array} \right).$$

Thus, every derivation of $L$ is nilpotent, i.e., $L$ is characteristically nilpotent.
\end{ex}

Let us give the definition of Leibniz $n$-algebras.
\begin{defn}\cite{Cas1}
 A vector space $\mathcal{L}$ with an $n$-ary multiplication
$[-,-,...,-]: \mathcal{L}^{\otimes n} \to \mathcal{L}$ is a
Leibniz $n$-algebra if it satisfies the following identity:
$$ [[x_1,x_2, \dots ,x_n],y_2,\dots ,y_n]=\sum_{i=1}^n [x_1,\dots ,
x_{i-1},[x_i,y_2,\dots,y_n],x_{i+1},\dots,x_n]. \eqno(2.2)
$$
\end{defn}

Let $L$ be a Leibniz algebra with the product $[-,-].$ Then the
vector space $L$ can be equipped with a Leibniz $n$-algebra
structure with the following product: $$[x_1, x_2, \dots, x_n] =
[x_1, [x_2, \dots, [x_{n-1},x_n]]].$$

\begin{defn}
A derivation of a Leibniz $n$-algebra $\mathcal{L}$ is a
$\mathbb{K}$-linear map $d: \mathcal{L}\rightarrow \mathcal{L}$
satisfying
$$d([x_1, x_2, \dots, x_n ]) = \sum\limits_{i=1}^n[x_1, \dots, d(x_i),
\dots, x_n ].$$
\end{defn}

The notion of Leibniz-derivation of Lie algebra was introduced in
\cite{Moens} and it generalizes the notions of derivation and
pre-derivation of Lie algebra.

\begin{defn}\label{d1} A Leibniz-derivation of order $n$ for a Lie algebra $G$ is an
endomorphism $P$ of $G$ satisfying the identity
$$P([x_1, [x_2, \dots, [x_{n-1}, x_n ]]]) = [P(x_1), [x_2, \dots, [x_{n-1}, x_n
]]] +$$$$+ [x_1, [P(x_2), \dots, [x_{n-1}, x_n ]]] + \dots + [x_1,
[x_2, \dots, [x_{n-1}, P(x_n)]]]$$ for every $x_1, x_2, \dots, x_n
\in G.$
\end{defn}
In other words, a Leibniz-derivation of order $n$ for a Lie
algebra $G$ is a derivation of $G$ viewed as a Leibniz $n$-algebra.

\bigskip

\section{Leibniz-derivation of Leibniz algebras}

The following example shows that Definition \ref{d1} is not
substantial for the case of Leibniz algebras.

\begin{ex}\label{e1} Let $R$ be an $(n+1)$-dimensional solvable Leibniz algebra
and $\{e_1, e_2, \dots, e_n, e_{n+1}\}$ be a basis of $R$ with the table of multiplication given by
$$\left\{\begin{array}{lll}
\, [e_1,e_1]=e_3, & [e_i,e_1]=e_{i+1}, & 2\leq i\leq n-1,\\
\, [e_1,e_{n+1}]=e_2+\sum\limits_{i=4}^{n-1}\alpha_ie_i,   & [e_2,e_{n+1}]=e_2+\sum\limits_{i=4}^{n-1}\alpha_ie_i,& \\
\, [e_i,e_{n+1}]=e_i+\sum\limits_{j=i+2}^n\alpha_{j-i+2}e_i, & 3\leq i\leq n, &\\
\end{array}\right.$$
\end{ex}
It is easy to see that $[R,[R,R]]=0.$ For the identity map $d$ we
have $$0= d([x,[y,z]]) = [d(x),[y,z]] + [x,[d(y),z]] +
[x,[y,d(z)]] = 0.$$

Therefore, the invertible map $d$ satisfies the condition of
Definition \ref{d1}, but the Leibniz algebra $R$ is not nilpotent,
i.e. analogue of Theorem \ref{t5} for Leibniz algebras is not
true.

\begin{rem} The Example \ref{e1} can be extended for any non nilpotent
solvable Leibniz algebra $L$ such that $L^2$ lies in the right
annihilator of $L$.
\end{rem}

Let us introduce $n$-ary multiplication as follows
$$[x_1, x_2, \dots, x_n]_r = [[[x_1, x_2], x_3] \dots, x_n].$$

The next example shows that, in general, a vector space equipped with defined
$n$-ary multiplication $[x_1, x_2, \dots, x_n]_r$ is not a Leibniz
$n$-algebra.

\begin{ex}\label{e3} Let $R$ be a solvable Leibniz algebra and let
$\{e_1, e_2, \dots, e_n, x\}$ be a basis of $R$ such that
multiplication table of $R$ in this basis has the following form
\cite{Cas2}:
$$\left\{\begin{array}{ll}
\, [e_i,e_1]=e_{i+1}, & 1\leq i\leq n-1,\\
\, [x,e_1]=e_1, & \\
\, [e_i,x]=-ie_i, & 1\leq i\leq n.\\
\end{array}\right.$$

It is not difficult to check that the vector space $R$ with
$k$-ary multiplication $[x_1, x_2, \dots, x_k]_r$ does not define
Leibniz $k$-algebra structure. Indeed, we have
$$ [[e_1,e_1, \dots ,e_1]_r,x,\dots ,x]_r=[\dots [[
[ \underbrace{\dots[[e_1, e_1],e_1], \dots, e_1}\limits_{k -
times},] \underbrace{x], x], \dots, x}\limits_{k-1 - times}]=
[\dots [[ e_k, \underbrace{x], x], \dots, x}\limits_{k-1 - times}]
= (-k)^{k-1}e_k.$$

On the other hand
$$\sum_{i=1}^k [e_1,\dots ,
e_{1},\underbrace{[e_1,x,\dots,x]_r}\limits_{i -
th},e_{1},\dots,e_1]_r =\sum_{i=1}^k [e_1,\dots ,
e_{1},\underbrace{(-1)^{k-1}e_1}\limits_{i -
th},e_{1},\dots,e_1]_r =$$$$= (-1)^{k-1}\sum_{i=1}^k
[\underbrace{\dots[[e_1, e_1],e_1], \dots, e_1}\limits_{k -
times}] = (-1)^{k-1}\sum_{i=1}^k e_k = (-1)^{k-1} k e_k.
$$
Hence identity (2.2) does not hold for $k \geq 3.$
\end{ex}

Now we define the notion of Leibniz-derivation for Leibniz
algebras.

\begin{defn}\label{d2}
A Leibniz-derivation of order $n \in \mathbb{N}$ for a Leibniz
algebra $L$ is a $\mathbb{K}$-linear map $d: L\rightarrow L$
satisfying
$$d([x_1, x_2, \dots, x_n ]_r) = \sum\limits_{i=1}^n[x_1, \dots,
d(x_i), \dots, x_n ]_r.$$
\end{defn}

\begin{prop} For Lie algebras Definition \ref{d2} agrees with Definition
\ref{d1}.
\end{prop}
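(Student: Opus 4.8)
The plan is to show that the two $n$-ary brackets, $[x_1,\dots,x_n]$ (the ``left-nested'' one used in Definition \ref{d1}) and $[x_1,\dots,x_n]_r$ (the ``right-nested'' one used in Definition \ref{d2}), coincide on a Lie algebra up to sign, and that the sign is the same on every summand of the derivation-type identity, so that the two conditions are literally equivalent. First I would prove, by induction on $n$, the identity
$$[x_1,[x_2,\dots,[x_{n-1},x_n]]] = (-1)^{n-1}[[[x_1,x_2],x_3]\dots,x_n]$$
valid in any Lie algebra. The base case $n=2$ is trivial, and $n=3$ is exactly the Jacobi/anti-commutativity manipulation $[x_1,[x_2,x_3]] = -[[x_1,x_2],x_3]+[[x_1,x_3],x_2]$ rewritten via skew-symmetry; for the inductive step one peels off $x_1$ on the left and uses anti-commutativity together with the $n-1$ case. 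Actually the cleanest route is: using skew-symmetry, $[x_1,[x_2,\dots]] = -[[x_2,\dots],x_1]$, then apply the Leibniz identity repeatedly (or simply observe both sides are the image of a bracketing, and induct). I would write this as a short Lemma or fold it into the proof.

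Next, granting that identity, write $c_n = (-1)^{n-1}$ so that $[x_1,\dots,x_n]_r = c_n\,[x_1,\dots,x_n]$ where on the right is the left-nested bracket of Definition \ref{d1}. Then for any linear map $d$ and any $i$,
$$[x_1,\dots,d(x_i),\dots,x_n]_r = c_n\,[x_1,\dots,d(x_i),\dots,x_n],$$
and likewise $d([x_1,\dots,x_n]_r) = c_n\,d([x_1,\dots,x_n])$, using linearity of $d$ in the first case and of the bracket/$d$ in the second. Hence the defining equation of Definition \ref{d2},
$$d([x_1,\dots,x_n]_r) = \sum_{i=1}^n [x_1,\dots,d(x_i),\dots,x_n]_r,$$
is obtained from the defining equation of Definition \ref{d1} by multiplying both sides by the nonzero scalar $c_n$. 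Since $c_n \neq 0$ in a field of characteristic zero (indeed any field), the two identities hold simultaneously, so $d$ is a Leibniz-derivation of order $n$ in the sense of Definition \ref{d2} if and only if it is one in the sense of Definition \ref{d1}. This proves the Proposition.

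I expect the only real content to be the sign identity $[x_1,[x_2,\dots,[x_{n-1},x_n]]] = (-1)^{n-1}[[[x_1,x_2],x_3]\dots,x_n]$, and its proof is a routine induction using anti-commutativity (the Leibniz identity is not even strictly needed beyond what anti-commutativity gives, since reassociating a single bracket by moving one factor past another costs exactly one sign). There is no genuine obstacle: once the bracket identity is in place, the equivalence of the two derivation conditions is immediate because both sides of each identity are transformed by the same scalar factor. I would keep the writeup to the inductive verification of the sign formula plus one sentence observing that multiplying a true identity by $(-1)^{n-1}$ yields a true identity, and conversely.
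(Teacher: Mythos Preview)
Your central claim, the identity
\[
[x_1,[x_2,\dots,[x_{n-1},x_n]]] \;=\; (-1)^{n-1}\,[[[x_1,x_2],x_3],\dots,x_n],
\]
is false already for $n=3$. In $\mathfrak{sl}_2$ with the standard basis $e,f,h$ one has $[e,[f,h]]=[e,2f]=2h$, while $[[e,f],h]=[h,h]=0$; no scalar relates these. The Jacobi identity you quote for the base case, $[x_1,[x_2,x_3]]=[[x_1,x_2],x_3]-[[x_1,x_3],x_2]$, has a second term that cannot be made to vanish by skew-symmetry, so ``reassociating a single bracket'' does \emph{not} cost exactly one sign. Consequently the map $[x_1,\dots,x_n]\mapsto [x_1,\dots,x_n]_r$ is not multiplication by a scalar, and your argument that the two derivation conditions differ by a common factor collapses.

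What is true, and what the paper uses, is the identity with \emph{reversed} arguments:
\[
[x_1,[x_2,\dots,[x_{n-1},x_n]]] \;=\; (-1)^{n-1}\,[x_n,x_{n-1},\dots,x_1]_r,
\]
obtained by applying anti-commutativity once to each of the $n-1$ brackets (no Jacobi needed). Applying this to every term of the identity in Definition~\ref{d1} shows it is equivalent, up to the global sign $(-1)^{n-1}$, to
\[
P([x_n,\dots,x_1]_r)=\sum_{i=1}^n[x_n,\dots,P(x_i),\dots,x_1]_r,
\]
and a final relabeling $x_i\leftrightarrow x_{n+1-i}$ gives Definition~\ref{d2}. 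Your overall strategy (same scalar on both sides, then cancel) is fine once the correct bracket identity is in place; the fix is simply to reverse the order of the variables rather than keep it.
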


\begin{proof} Let $L$ be a Lie algebra, then we have
 $$P([x_1, [x_2, \dots, [x_{n-1}, x_n ]]]) =(-1)^n
P([[[x_n, x_{n-1}], \dots, x_2],x_1])= (-1)^nP([x_n, x_{n-1},
\dots, x_1 ]_r).$$
On the other hand,
$$[P(x_1), [x_2, \dots, [x_{n-1}, x_n ]]] + [x_1, [P(x_2),
\dots, [x_{n-1}, x_n ]]] + \dots + [x_1, [x_2, \dots, [x_{n-1},
P(x_n)]]]=$$
$$=(-1)^n[[[x_n, x_{n-1}],\dots, x_2],P(x_1)] + (-1)^n[[[x_n, x_{n-1}],\dots, P(x_2)],x_1] +
\dots + (-1)^n[[[P(x_n), x_{n-1}],\dots, x_2],x_1]=$$
$$=(-1)^n\left([x_n, x_{n-1},\dots, x_2,P(x_1)]_r + [x_n, x_{n-1},\dots, P(x_2),x_1]_r +
\dots + [P(x_n), x_{n-1},\dots, x_2,x_1]_r\right)=$$
$$=(-1)^n \sum\limits_{i=1}^n[x_n, \dots, P(x_i), \dots, x_1 ]_r.$$

This implies the equality
$$P([x_1, [x_2, \dots, [x_{n-1}, x_n ]]]) = [P(x_1), [x_2, \dots, [x_{n-1}, x_n
]]] +$$$$+ [x_1, [P(x_2), \dots, [x_{n-1}, x_n ]]] + \dots + [x_1,
[x_2, \dots, [x_{n-1}, P(x_n)]]],$$ which is equivalent to
$$P([x_n, x_{n-1},
\dots, x_1 ]_r)= \sum\limits_{i=1}^n[x_n, \dots, P(x_i), \dots,
x_1 ]_r. $$

Relabeling $x_i$ with $x_{n+1-i}$ for $1 \leq i \leq n$ we
complete the proof of the proposition.
\end{proof}

Let $LDer_n(L)$ denote the set of all Leibniz-derivations of order
$n$ for a Leibniz algebra $L$ and let $LDer(L)$ be the set of all
Leibniz-derivations, i.e. $LDer(L) = \bigcup_{n\in
\mathbb{N}}LDer_n(L).$

Note that a Leibniz derivation of order $2$ is a derivation.
Moreover, any derivation is a Leibniz-derivation of any order $n.$
Thus, the order of a Leibniz-derivation is not unique.

\begin{lem} The following statements are true

1) If $s, t \in \mathbb{N}$ and $s| t,$ then $LDer_{s+1}(L)
\subset LDer_{t+1}(L);$

2) for any $k, l \in \mathbb{N},$  $LDer_{k}(L) \cap LDer_{l}(L)
\subset LDer_{k+l-1}.$
\end{lem}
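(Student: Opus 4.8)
The plan is to work directly from Definition \ref{d2}, treating a Leibniz-derivation of order $n$ as an ordinary derivation of the $n$-ary operation $[x_1,\dots,x_n]_r = [[\cdots[[x_1,x_2],x_3],\cdots],x_n]$. The essential observation, which I would establish first as a sublemma, is that the $t$-ary right-nested bracket can be obtained by substituting an $s$-ary right-nested bracket into the first slot of a shorter right-nested bracket whenever $s$ divides... wait, more precisely the key identity is associativity-type nesting: for $s, m \geq 1$,
$$[x_1,\dots,x_{s+m-1}]_r = [\,[x_1,\dots,x_s]_r,\, x_{s+1},\dots,x_{s+m-1}\,]_r,$$
since both sides are just the fully left-nested (in the sense of right multiplication) product of the same $s+m-1$ elements in the same order. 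This is immediate from the definition of $[-,\dots,-]_r$ and requires no Leibniz identity.

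For part 1), suppose $d \in LDer_{s+1}(L)$ and $s \mid t$, say $t = sq$. I would show by induction on $q$ that $d \in LDer_{sq+1}(L)$. The inductive step uses the nesting identity to write a $(sq+1)$-ary bracket $[x_1,\dots,x_{sq+1}]_r$ as $[\,[x_1,\dots,x_{s(q-1)+1}]_r,\, x_{s(q-1)+2},\dots,x_{sq+1}\,]_r$, i.e. as an $(s+1)$-ary bracket whose first argument is an $(s(q-1)+1)$-ary bracket. Applying the $LDer_{s+1}$ property to the outer bracket distributes $d$ across the $s+1$ slots; on the first slot we get $d$ applied to an $(s(q-1)+1)$-ary bracket, to which the induction hypothesis ($d \in LDer_{s(q-1)+1}(L)$) applies, distributing $d$ across those remaining $s(q-1)+1$ slots. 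Collecting terms, $d$ is distributed across all $sq+1$ slots, which is exactly the $LDer_{sq+1}$ identity. The base case $q=1$ is the hypothesis. (Here I am implicitly using that $s \mid t$ together with $t = s+1-1 = s\cdot\frac{t}{s}$; note $s+1$ divides $t+1$ is \emph{not} what is used — rather $s$ divides $t$, so $t+1 = s\cdot\frac{t}{s} + 1$ has exactly the form needed for the $q$-fold nesting.)

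For part 2), let $d \in LDer_k(L) \cap LDer_l(L)$; I want $d \in LDer_{k+l-1}(L)$. Write a $(k+l-1)$-ary bracket using the nesting identity with $s = l$, $m = k$: $[x_1,\dots,x_{k+l-1}]_r = [\,[x_1,\dots,x_l]_r,\, x_{l+1},\dots,x_{k+l-1}\,]_r$, a $k$-ary bracket in the outer variables $y_1 = [x_1,\dots,x_l]_r, y_2 = x_{l+1},\dots, y_k = x_{k+l-1}$. Apply $d \in LDer_k(L)$ to distribute $d$ over $y_1,\dots,y_k$; the term with $d(y_1) = d([x_1,\dots,x_l]_r)$ is then expanded using $d \in LDer_l(L)$ to distribute $d$ over $x_1,\dots,x_l$. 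Reassembling via the nesting identity in reverse, every summand is $[x_1,\dots,d(x_i),\dots,x_{k+l-1}]_r$ for $i=1,\dots,k+l-1$, each appearing exactly once, giving the $LDer_{k+l-1}$ identity. The main (minor) obstacle is purely bookkeeping: making sure that when $d$ hits the compound argument and one uses the lower-order property, the indices of the resulting terms align so that each single-slot substitution $d(x_i)$ is produced once and only once; the nesting identity makes this transparent, so there is no genuine difficulty, only care with notation. Note also that the hypothesis in 2) that $k \geq 2$ and $l \geq 2$ is implicit since $LDer_1$ is not meaningful; with $k,l \geq 2$ the decomposition $k+l-1 = l + (k-1)$ has both pieces $\geq 1$ as required by the nesting identity.
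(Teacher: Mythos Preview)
Your proof is correct and follows the standard approach: the nesting identity $[x_1,\dots,x_{s+m-1}]_r = [\,[x_1,\dots,x_s]_r,\, x_{s+1},\dots,x_{s+m-1}\,]_r$ together with straightforward induction is exactly the argument the paper has in mind when it writes ``The proof is similar to that of Lemma 2.3 \cite{Moens}.'' One small quibble: $LDer_1(L)$ is perfectly meaningful (it equals $\mathrm{End}(L)$, since the order-$1$ identity is vacuous), and both statements hold trivially when $k=1$ or $l=1$, so your caveat at the end is unnecessary.
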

\begin{proof} The proof is similar to that of Lemma 2.3 \cite{Moens}.
\end{proof}

Similarly to the case of Lie algebras we call a Leibniz-derivation
of order $3$ a pre-derivation of Leibniz algebra. A nilpotent
Leibniz algebra is called \emph{strongly nilpotent} if all its
Leibniz pre-derivations are nilpotent.

Note that a strongly nilpotent Leibniz algebra is characteristically
nilpotent, but the inverse is not true in general.

\begin{ex} Any pre-derivation of the characteristically nilpotent Leibniz
 algebra in Example \ref{excarnil} with $n=6$ have the matrix form:
$$\left(\begin{array}{cccccc}
a_1& a_1& a_3&a_4&a_5&a_6\\[1mm]
0& 2a_1& a_3&a_4&b_5&b_6\\[1mm]
0& 0& 3a_1&-a_1 + a_3&c_5&c_6\\[1mm]
0& 0& 0& 4a_1&2a_1 + a_3&a_4\\[1mm]
0& 0& 0& 0& 5a_1 & a_1 + a_3\\[1mm]
0& 0& 0& 0& 0& 6a_1\\[1mm]
\end{array} \right)$$
Thus, this Leibniz algebra is not strongly nilpotent.
\end{ex}

\begin{prop} The Leibniz algebra $L$ in Example \ref{excarnil} is strongly nilpotent
 if $n>6$.
\end{prop}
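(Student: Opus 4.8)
The plan is to prove that \emph{every} Leibniz-derivation $d$ of order $3$ of $L$ is nilpotent; strong nilpotency of $L$ then follows at once. Fix such a $d$ and write $d(e_i)=\sum_{j=1}^{n}a_{ji}e_j$. I would first collect the structural facts that do not yet use $n>6$. A direct computation gives $L^2=\langle e_3,\dots,e_n\rangle$ and $L^3=[[L,L],L]=\langle e_4,\dots,e_n\rangle$, and every bracket $[x_1,x_2,x_3]_r=[[x_1,x_2],x_3]$ lies in $L^3$; applying $d$ and the order-$3$ identity therefore yields $d(L^3)\subseteq L^3$, i.e. $d(e_k)\in\langle e_4,\dots,e_n\rangle$ for $4\le k\le n$. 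To control $d$ on the first three basis vectors I would exploit degenerate triples: since $[y,e_3]=0$ for every $y\in L$, evaluating the identity on $[e_1,e_3,e_1]_r=0$ leaves only the middle summand, $0=[[e_1,d(e_3)],e_1]=a_{13}e_4+a_{23}e_5$, so $a_{13}=a_{23}=0$; and comparing the coefficient of $e_4$ in $d(e_5)$ computed from the two descriptions $e_5=[e_1,e_1,e_2]_r=[e_3,e_1,e_1]_r$ gives $a_{12}=a_{13}+a_{23}=0$. Consequently $d(e_2)\in\langle e_2,\dots,e_n\rangle$ and $d(e_3)\in\langle e_3,\dots,e_n\rangle$, so the matrix of $d$ is already triangular apart from the first column, and it remains to show that all diagonal entries vanish (after which the first column lies in $\langle e_3,\dots,e_n\rangle$ too).

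The bulk of the remaining work is a bookkeeping of numerical relations, each obtained by writing one fixed basis vector as a triple in two ways and comparing coefficients in the order-$3$ identity. From $e_4=[e_1,e_1,e_1]_r=[e_2,e_1,e_1]_r$, comparing the coefficients of $e_4$ and of $e_5$ in $d(e_4)$ gives $a_{22}=a_{11}+a_{21}$ and $a_{31}=a_{32}$. From $e_6=[e_4,e_1,e_1]_r=[e_2,e_2,e_2]_r$, comparing the coefficient of $e_6$ in $d(e_6)$ gives $3a_{22}=5a_{11}+a_{21}$, hence with the above $a_{11}=a_{21}$ and $a_{22}=2a_{11}$; the analogous comparison along $e_5=[e_1,e_1,e_2]_r=[e_3,e_1,e_1]_r$ gives $a_{33}=a_{21}+a_{22}$, and pursuing this one finds the diagonal of $d$ to be $(a_{11},2a_{11},\dots,na_{11})$. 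All of this uses only $n\ge 6$; in fact for $n=6$ these relations exactly reproduce the family of non-nilpotent Leibniz pre-derivations exhibited just before the proposition.

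The decisive point where $n>6$ enters is the comparison of the coefficient of $e_7$ in $d(e_6)$, again via $e_6=[e_4,e_1,e_1]_r=[e_2,e_2,e_2]_r$: this is a genuine linear equation precisely because $e_7\ne 0$, i.e. because $n\ge 7$. The description $[e_2,e_2,e_2]_r$ contributes $a_{32}$ to that coefficient while $[e_4,e_1,e_1]_r$ contributes $4a_{21}+a_{31}$; with $a_{31}=a_{32}$ this forces $a_{21}=0$, whence $a_{11}=a_{21}=0$, $a_{22}=0$ and $a_{33}=0$, so every diagonal entry vanishes. Feeding $a_{11}=a_{12}=a_{13}=a_{21}=a_{22}=a_{23}=a_{33}=0$ back into the identity for $e_k=[e_{k-2},e_1,e_1]_r$ ($k\ge 4$) and using that $[e_i,e_j]=0$ whenever $j\ge 3$, a short induction gives $d(e_k)\in\langle e_{k+1},\dots,e_n\rangle$ for $k\ge 4$, while $d(e_1),d(e_2)\in\langle e_3,\dots,e_n\rangle$ and $d(e_3)\in\langle e_4,\dots,e_n\rangle$. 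Thus the matrix of $d$ is strictly lower triangular, $d$ is nilpotent, and $L$ is strongly nilpotent.

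The main obstacle I foresee is not conceptual but organizational: one must keep very precise track of which products $[e_i,e_j]$ and which doubly nested products vanish, and this depends on $n$ — the entire difference between $n=6$ and $n>6$ is concentrated in whether the $e_7$-coefficient of $d(e_6)$ records an equation at all. A slicker but less elementary route, worth mentioning, is that $LDer_3(L)$ is the Lie algebra of the algebraic group of automorphisms of the trilinear map $[-,-,-]_r$, hence closed under Jordan decomposition; it would then suffice to show that every \emph{semisimple} Leibniz pre-derivation of $L$ is zero, but locating the eigenvalues still seems to require essentially the computation above.
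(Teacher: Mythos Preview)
Your proposal is correct and follows essentially the same route as the paper: both evaluate the order-$3$ Leibniz-derivation identity on carefully chosen triples (two different triple expressions for each of $e_4,e_5,e_6$, together with a degenerate triple involving $e_3$) to force the coefficients $a_{11}=a_{21}=a_{22}=a_{33}=a_{12}=a_{13}=a_{23}=0$, and then finish by the induction $d(e_k)=[[d(e_{k-2}),e_1],e_1]$ to get strict triangularity. Your write-up is in fact a bit more transparent than the paper's in isolating the role of $n>6$ (namely that the $e_7$-coefficient of $d(e_6)$ is the equation that kills $a_{21}$); the only cosmetic slip is the early claim that the matrix is ``triangular apart from the first column,'' which is premature at that stage but is fully justified by your final induction.
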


\begin{proof}
Let $d: L \rightarrow L$ be a pre-derivation of $L.$

Put
$$d(e_1) = \sum_{i=1}^{n}a_ie_i, \
d(e_2) = \sum_{i=1}^{n}b_ie_i, \ d(e_3) =\sum_{i=1}^{n}c_ie_i.$$

Consider the property of pre-derivation
$$d(e_4) =d([e_1, e_1, e_1]_r)= (3a_1+a_2)e_4 + (a_3+2a_2)e_5 +
\sum_{i=4}^{n-2}a_ie_{i+2}.$$

On the other hand,
$$d(e_4) =d([e_2, e_1, e_1]_r)= (2a_1+b_1+b_2)e_4 + (b_3+2a_2)e_5 +
\sum_{i=4}^{n-2}b_ie_{i+2}.$$

Comparing coefficients of basis elements we have
$$b_1 +b_2=a_1 + a_2, \quad b_i=a_i, \quad 3 \leq i \leq n-2.$$
The equality $d([e_1, e_1, e_3]_r) = 0$ implies $0 = c_1e_4
+c_2e_5,$ hence $c_1 = c_2= 0.$
\smallskip

The chain of equalities
$$b_1e_4 + (2a_1 + a_2+b_2)e_5 + (a_3 + a_2)e_6 + \sum_{i=4}^{n-3}a_ie_{i+3}=d([e_1, e_2, e_1]_r)=
$$ $$d(e_5) = d([e_3, e_1, e_1]_r) = (2a_1 + c_3)e_5 + (2a_2 + c_4)e_6
+ \sum_{i=5}^{n-2}c_ie_{i+2}.$$ deduce
$$b_1 =0, \quad c_3 = a_2+b_2, \quad c_4 = a_3 - a_2, \quad c_i=a_{i-1}, \quad 4 \leq i \leq n-2.$$

From the equalities
$$(3a_1+3a_2)e_6 + \sum_{i=3}^{n-4}a_ie_{i+4}=
d([e_1, e_2, e_2]_r)=d(e_6) =$$ $$d([e_4, e_1, e_1]_r)=
(5a_1+a_2)e_6 + (4a_2+a_3)e_7 + \sum_{i=4}^{n-4}a_ie_{i+4}$$
 we get
$a_1 =a_2 =0.$

Since $b_2 = a_1+a_2$ and $c_3=a_2+b_2$, we have $b_2 = c_3 =0.$

Thus we obtain
$$d(e_1) = \sum_{i=3}^{n}a_ie_i, \ d(e_2) = \sum_{i=3}^{n-2}a_ie_i+b_{n-1}e_{n-1}+ b_ne_n, \ d(e_3) = \sum_{i=3}^{n-3}a_ie_{i+1} + c_{n-1}e_{n-1}+ c_ne_n.$$

Finally, from the expression $d([e_{i-2}, e_1, e_1]_r)$ we derive
$$d(e_i) = a_3e_{i+1} + a_4e_{i+2} + \dots + a_{n+2-j}e_n,\ i \geq 4$$ which completes
the proof of Proposition.
\end{proof}

Below we present $7$- and $8$-dimensional characteristically
nilpotent Leibniz algebras, which are not strongly nilpotent.

\begin{ex} The $7$-dimensional Leibniz algebra with table of
multiplication:
$$\left\{\begin{array}{ll}
[e_1,e_1]=e_{3},&  \\[1mm]
[e_i,e_1]=e_{i+1}, &  2\leq i \leq 6,\\[1mm]
[e_1,e_2]= e_4-2e_5, & \\[1mm]
[e_i,e_2]= e_{i+2}-2e_{i+3},& 2\leq i \leq 4,\\[1mm]
[e_5,e_2]= e_7\\[1mm]
\end{array} \right.$$
is characteristically nilpotent, but not strongly nilpotent.
\end{ex}

\begin{ex} The $8$-dimensional filiform Leibniz algebra with table
of multiplication:
$$\left\{\begin{array}{ll}
[e_1,e_1]=e_{3},&  \\[1mm]
[e_i,e_1]=e_{i+1}, &  2\leq i \leq 6,\\[1mm]
[e_1,e_2]= e_4-2e_5+5e_6, & \\[1mm]
[e_i,e_2]= e_{i+2}-2e_{i+3}+5e_{i+4},& 2\leq i \leq 4,\\[1mm]
[e_5,e_2]= e_7-2e_8\\[1mm]
[e_6,e_2]= e_8\\[1mm]
\end{array} \right.$$
is characteristically nilpotent, but not strongly nilpotent.
\end{ex}

Following the proofs of Lemmas in \cite{Fi} and  \cite{Cam} for
derivations of Lie and Leibniz $n$-algebras respectively, we get
the following statement for Leibniz-derivations of order $n$ of
Leibniz algebras.

\begin{lem} \label{derivacion}
For a Leibniz-derivation $d:L  \rightarrow L$ of order $n$ of a
Leibniz algebra $L$ over a field of characteristic zero, the
following formula holds for any $k \in \mathbb{N}$:
$$d^k([x_1,\dots,x_n]_r) = \sum_{i_1+i_2+\cdots+i_n=k}\frac{k!}{i_1!i_2!\dots
i_n!}[d^{i_1}(x_1),d^{i_2}(x_2),\dots,d^{i_n}(x_n)]_r \eqno(3.1).
$$
\end{lem}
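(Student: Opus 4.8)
The plan is to prove formula (3.1) by induction on $k$, exactly parallelling the familiar ``binomial expansion'' argument that appears for ordinary derivations of Lie and Leibniz $n$-algebras. First I would dispose of the base case: for $k=1$ the right-hand side of (3.1) is the sum over all $n$-tuples $(i_1,\dots,i_n)$ of non-negative integers summing to $1$, i.e.\ exactly the tuples having a single $1$ in position $j$ and zeros elsewhere, with multinomial coefficient $1!/(0!\cdots 1!\cdots 0!)=1$; so the sum reduces to $\sum_{j=1}^n[x_1,\dots,d(x_j),\dots,x_n]_r$, which is the defining identity of a Leibniz-derivation of order $n$ from Definition \ref{d2}. (One should also note the trivial case $k=0$, where the formula reads $d^0([x_1,\dots,x_n]_r)=[x_1,\dots,x_n]_r$.)

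For the inductive step, assume (3.1) holds for some $k\ge 1$ and apply $d$ to both sides. On the left this gives $d^{k+1}([x_1,\dots,x_n]_r)$. On the right, $d$ is applied term by term; for a fixed tuple $(i_1,\dots,i_n)$ with $i_1+\cdots+i_n=k$, the element $[d^{i_1}(x_1),\dots,d^{i_n}(x_n)]_r$ is itself an $n$-ary right-bracket of the elements $d^{i_t}(x_t)$, so by Definition \ref{d2} (with the arguments $x_t$ replaced by $d^{i_t}(x_t)$) we obtain $d\bigl([d^{i_1}(x_1),\dots,d^{i_n}(x_n)]_r\bigr)=\sum_{j=1}^n[d^{i_1}(x_1),\dots,d^{i_j+1}(x_j),\dots,d^{i_n}(x_n)]_r$. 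Substituting, $d^{k+1}([x_1,\dots,x_n]_r)$ becomes a double sum over tuples $(i_1,\dots,i_n)$ with sum $k$ and over positions $j$, of the bracket with exponents $(i_1,\dots,i_j+1,\dots,i_n)$, weighted by $k!/(i_1!\cdots i_n!)$.

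The remaining step is purely combinatorial: reindex this double sum by the new exponent tuple $(\ell_1,\dots,\ell_n)=(i_1,\dots,i_j+1,\dots,i_n)$, which ranges over all tuples of non-negative integers with $\ell_1+\cdots+\ell_n=k+1$. A fixed such $(\ell_1,\dots,\ell_n)$ arises from position $j$ precisely when $\ell_j\ge 1$, via $(i_1,\dots,i_n)=(\ell_1,\dots,\ell_j-1,\dots,\ell_n)$, contributing weight $k!/(\ell_1!\cdots(\ell_j-1)!\cdots\ell_n!)=\ell_j\cdot k!/(\ell_1!\cdots\ell_n!)$. Summing over all valid $j$ gives total coefficient $\bigl(\sum_{j=1}^n\ell_j\bigr)k!/(\ell_1!\cdots\ell_n!)=(k+1)!/(\ell_1!\cdots\ell_n!)$, which is exactly the coefficient demanded by (3.1) at level $k+1$; terms with $\ell_j=0$ simply do not receive a contribution from position $j$, consistently with the convention $1/(-1)!=0$. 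This completes the induction. I do not anticipate a genuine obstacle here — the only point requiring a little care is making the reindexing and the coefficient bookkeeping precise, i.e.\ checking that the multinomial recursion $\sum_{j:\,\ell_j\ge 1}\binom{k}{\ell_1,\dots,\ell_j-1,\dots,\ell_n}=\binom{k+1}{\ell_1,\dots,\ell_n}$ is invoked correctly — and that the right-bracket $[-,\dots,-]_r$ is genuinely multilinear, so that Definition \ref{d2} may legitimately be applied with the substituted arguments $d^{i_t}(x_t)$ in place of the original $x_t$.
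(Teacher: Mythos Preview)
Your proposal is correct and is precisely the standard induction-on-$k$ argument that the paper has in mind: the paper itself gives no explicit proof but simply refers to the analogous lemmas in \cite{Fi} and \cite{Cam}, whose proofs proceed exactly by the multinomial recursion you outline. There is nothing to add.
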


\section{Nilpotent Leibniz algebras}

Starting with a Leibniz algebra $L$, we denote the
$n$-ary algebra with multiplication
$[-,-, \dots, -]_r$ by $\mathcal{L}_n(L)$.
A subalgebra $I$ is called an $n$-ideal of $L$ or an ideal of $\mathcal{L}_n(L),$
if it satisfies
$$\sum\limits_{i=1}^n[L, \dots, I, \dots, L]_r \subseteq I.$$

Let $M$ be any Leibniz subalgebra of $L.$
Consider the following sequences:
$$\mathcal{L}_n^1(M)= M, \quad \mathcal{L}_n^{k+1}(M) =
[\mathcal{L}_n^k(M), M, \dots, M]_r, \quad k \geq 1,$$
$$\mathcal{L}_n^{[1]}(M)= M, \quad \mathcal{L}_n^{[s+1]}(M) =
[\mathcal{L}_n^{[s]}(M), \mathcal{L}_n^{[s]}(M) \dots,
\mathcal{L}_n^{[s]}(M)]_r, \quad s \geq 1.$$

\begin{defn} A Leibniz algebra $L$ is called $n$-nilpotent ($n$-solvable) if
there exists a natural number $p \in \mathbb{N} \ (q \in
\mathbb{N})$ such that $\mathcal{L}_n^p(L)=0$
($\mathcal{L}_n^{[q]}(L)=0$).
\end{defn}

\begin{lem}\label{l11} Let $M$ be an ideal of $L.$ The following inclusions
 are true

$$\mathcal{L}_n^{[k]}(M)  \subseteq M^{[k]}, \quad  \quad \mathcal{L}_n^{k}(M)  \subseteq
M^{k}.$$
\end{lem}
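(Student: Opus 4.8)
The plan is to prove both inclusions simultaneously by induction, since the arguments are structurally identical; the key point is simply that the $n$-ary bracket $[y_1,\dots,y_n]_r = [[[y_1,y_2],y_3]\cdots,y_n]$ is an iterated binary bracket, so every $n$-ary product lands inside an appropriate term of the binary lower-central or derived series of $M$. First I would record the base cases: for $k=1$ both sides equal $M$, so the inclusions hold trivially.

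For the lower-central inclusion $\mathcal{L}_n^k(M)\subseteq M^k$, I would induct on $k$. Assuming $\mathcal{L}_n^k(M)\subseteq M^k$, a generator of $\mathcal{L}_n^{k+1}(M)$ has the form $[z,m_2,\dots,m_n]_r$ with $z\in\mathcal{L}_n^k(M)\subseteq M^k$ and $m_2,\dots,m_n\in M$. Unwinding the definition of $[-,\dots,-]_r$, this element equals $[[[\cdots[[z,m_2],m_3]\cdots],m_n]]$, i.e.\ $z$ bracketed on the right by $n-1$ elements of $M$. Since $M^k$ is an ideal of $M$ (indeed $[M^k,M]=M^{k+1}\subseteq M^k$), the first bracket $[z,m_2]\in M^{k+1}$, and each further right-multiplication by an element of $M$ keeps us inside $M^{k+1}\subseteq M^k$; more precisely one gets $[z,m_2,\dots,m_n]_r\in M^{k+n-1}\subseteq M^{k+1}$. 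Hence $\mathcal{L}_n^{k+1}(M)\subseteq M^{k+1}$, closing the induction. (The hypothesis that $M$ is an ideal of $L$ is what guarantees these binary products stay in $M$ at all; the containment in the powers of $M$ only uses that $M$ is a subalgebra.)

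For the derived-series inclusion $\mathcal{L}_n^{[k]}(M)\subseteq M^{[k]}$, I would again induct on $k$, now using the previously established fact together with the observation that a generator of $\mathcal{L}_n^{[s+1]}(M)$ is $[w_1,\dots,w_n]_r$ with all $w_i\in \mathcal{L}_n^{[s]}(M)\subseteq M^{[s]}$ (inductive hypothesis). Writing this out, $[w_1,w_2]\in[M^{[s]},M^{[s]}]=M^{[s+1]}$, and then bracketing on the right successively by $w_3,\dots,w_n\in M^{[s]}$: since $M^{[s+1]}$ is an ideal of $M^{[s]}$ (as $[M^{[s+1]},M^{[s]}]\subseteq[M^{[s]},M^{[s]}]=M^{[s+1]}$), each such right multiplication stays in $M^{[s+1]}$. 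Therefore $[w_1,\dots,w_n]_r\in M^{[s+1]}$ and $\mathcal{L}_n^{[s+1]}(M)\subseteq M^{[s+1]}$.

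I do not expect a serious obstacle here: the only thing to be careful about is the bookkeeping of which ideal properties are being invoked at each right-multiplication step, and — for the derived case — making sure one uses $M^{[s]}$ rather than $M$ as the ambient algebra so that the ideal property $[M^{[s+1]},M^{[s]}]\subseteq M^{[s+1]}$ applies. Everything else is a direct unravelling of the definition of $[-,\dots,-]_r$ as an iterated binary bracket together with the standard inclusions $M^{k+1}\subseteq M^k$ and $M^{[s+1]}\subseteq M^{[s]}$ for a Leibniz algebra.
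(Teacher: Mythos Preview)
Your proposal is correct and follows essentially the same route as the paper's proof: induction on $k$, unwind the $n$-ary bracket $[-,\dots,-]_r$ as an iterated binary bracket, and use the ideal property of $M^{k}$ (resp.\ $M^{[k]}$) to absorb the successive right multiplications. The only cosmetic differences are that the paper handles the derived-series case first and invokes the stronger fact that $M^{[k]}$ is an ideal of the full algebra $L$ (rather than just of $M^{[k-1]}$) to justify $[M^{[k+1]},M^{[k]}]\subseteq M^{[k+1]}$, while you instead appeal to $M^{[s+1]}$ being an ideal of $M^{[s]}$; either justification suffices.
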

\begin{proof} It is easy to check that $M^k$ and $M^{[k]}$ are also ideals of
$L$ for any $k$. We shall proof the first embedding by induction
on $k$ for any $n.$

If $k=2,$ then $$\mathcal{L}_n^{[2]}(M) = [M, M, M, \dots, M]_r =
[[[M, M], M], \dots, M] = [[M^{[2]}, M], \dots, M] \subseteq
M^{[2]}.$$

Suppose that the statement holds for some $k$ and we will prove it
for $k+1.$
$$\mathcal{L}_n^{[k+1]}(M) = [[[\mathcal{L}_n^{[k]}(M),
\mathcal{L}_n^{[k]}(M)], \mathcal{L}_n^{[k]}(M)], \dots,
\mathcal{L}_n^{[k]}(M)] \subseteq$$ $$\subseteq[[[M^{[k]},
M^{[k]}],M^{[k]}], \dots, M^{[k]}] = [[M^{[k+1]},M^{[k]}], \dots,
M^{[k]}] \subseteq M^{[k+1]}.$$

The second inclusion is established in a similar way.
 \end{proof}

\begin{lem}\label{l3} $M^{[tk+1]} \subseteq \mathcal{L}_n^{[k+1]}(M),$ where
$k\in \mathbb{N}$ and $t$ is a natural number such that $2^t \geq
n.$
\end{lem}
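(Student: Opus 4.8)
The plan is to prove the inclusion $M^{[tk+1]} \subseteq \mathcal{L}_n^{[k+1]}(M)$ by induction on $k$, where $t$ is fixed with $2^t \geq n$. The base case $k=0$ reads $M^{[1]} \subseteq \mathcal{L}_n^{[1]}(M)$, i.e. $M \subseteq M$, which is trivial (or one starts at $k=1$, where $M^{[t+1]} \subseteq \mathcal{L}_n^{[2]}(M)$ must be checked directly; see below). For the inductive step, I would assume $M^{[tk+1]} \subseteq \mathcal{L}_n^{[k+1]}(M)$ and aim to show $M^{[t(k+1)+1]} = M^{[tk + t + 1]} \subseteq \mathcal{L}_n^{[k+2]}(M)$.

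The key computational idea is to control how the derived series of $M$ behaves under iterated bracketing. By definition, $\mathcal{L}_n^{[k+2]}(M) = [\,[\,[\mathcal{L}_n^{[k+1]}(M),\mathcal{L}_n^{[k+1]}(M)],\mathcal{L}_n^{[k+1]}(M)],\dots,\mathcal{L}_n^{[k+1]}(M)\,]_r$ with $n$ copies of $\mathcal{L}_n^{[k+1]}(M)$, which by the induction hypothesis contains the analogous $n$-fold right-normed bracket of $M^{[tk+1]}$ with itself. So it suffices to prove that the $n$-fold bracket $[\,[\cdots[M^{[j]},M^{[j]}],\dots],M^{[j]}]_r$ (with $n$ entries, each equal to $M^{[j]}$ for $j = tk+1$) contains $M^{[j+t]}$. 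In fact this reduces to a statement purely about the binary Leibniz algebra: I claim that the $n$-fold right-normed product of an ideal $N$ with itself contains $N^{[t]}$ whenever $2^t \geq n$. To see this, note first that $[N,N] = N^{[2]}$, then $[[N,N],N]$; more efficiently, observe $[N^{[2]},N^{[2]}] \subseteq [[N,N],N]$-type expressions sit inside longer right-normed products, so by repeatedly "doubling" one gets $N^{[2]} \subseteq [N,N]$, $N^{[3]} \subseteq$ (the $4$-fold bracket), and inductively $N^{[s+1]} \subseteq$ (the $2^s$-fold right-normed bracket of $N$'s). Since any $m$-fold right-normed bracket with $m \geq 2^s$ contains the $2^s$-fold one (padding with extra copies of $N$ on the right only shrinks, but one can instead nest: $[\cdots] \supseteq [[\cdots]_{2^s\text{-fold}}, N, \dots]$ need not help — rather, one shows the $m$-fold bracket for $m = 2^s$ already contains $N^{[s+1]}$ and that larger brackets do too because the first $2^s$ slots already produce $N^{[s+1]}$ and further bracketing with $N$ keeps it an ideal, hence inside $N^{[s+1]}$). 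Applying this with $N = M^{[tk+1]}$ and $s = t$ gives $(M^{[tk+1]})^{[t+1]} \subseteq \mathcal{L}_n^{[k+2]}(M)$, and since $(M^{[j]})^{[t+1]} = M^{[j+t]}$, this is exactly $M^{[tk+t+1]} \subseteq \mathcal{L}_n^{[k+2]}(M)$, completing the induction.

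I expect the main obstacle to be the bookkeeping in the auxiliary claim that the $n$-fold right-normed bracket of an ideal $N$ with itself contains $N^{[t]}$ when $2^t \geq n$: one has to argue carefully that right-normed (left-nested) products of length $m$ are "at least as large" as shorter ones in the relevant sense, which is where the ideal property (from Lemma \ref{l11}'s observation that $M^{[k]}$ is an ideal of $L$) and the Leibniz identity are used — specifically, that once an inner bracket produces some power $N^{[t]}$, bracketing it further on the right with $N$ stays inside $N^{[t]}$ because $N^{[t]}$ is an ideal. The cleanest route is: first establish by induction on $s$ that $N^{[s+1]} \subseteq \underbrace{[[\cdots[N,N],N],\dots,N]}_{2^s \text{ copies}}$ using $N^{[s+1]} = [N^{[s]},N^{[s]}]$ and the two-sided inductive bound together with the ideal property to absorb the length discrepancy between $2^{s-1}+2^{s-1}$ and $2^s$; then observe that for $n \leq 2^t$, the $n$-fold bracket contains the result of first forming the $n$-fold bracket, which — since $n \geq 2$ and each additional slot past the second only brackets an ideal with $N$ — still contains $N^{[t+1]}$ once $n$ is large enough, i.e. $n \ge 2^t$... wait, we need $2^t \ge n$, so the $n$-fold bracket with $n \le 2^t$; here one uses that the $n$-fold right-normed bracket \emph{contains} the $2^t$-fold one is false, so instead I would directly induct to show the $n$-fold bracket contains $N^{[\lceil \log_2 n\rceil + 1]} \supseteq$ nothing useful — the correct and safe statement to prove by induction is simply: the right-normed $n$-fold self-bracket of $N$ contains $N^{[r]}$ for the largest $r$ with $2^{r-1} \le n$, equivalently $N^{[t]} \subseteq (n\text{-fold bracket})$ when $2^{t-1} \le n$, and this is what the hypothesis $2^t \ge n$ (hence $2^{t-1} < n$ or $=$) secures after re-indexing. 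I would present this sub-lemma as a short induction and then deduce Lemma \ref{l3} in one line.
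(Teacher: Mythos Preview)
Your overall architecture --- induct on $k$ and reduce the step to an auxiliary claim of the form ``the $n$-fold right-normed self-bracket of an ideal $N$ contains $N^{[t+1]}$ when $2^t\geq n$'' --- is exactly the paper's. The gap is in the auxiliary claim itself, where you repeatedly reverse the direction of the basic inclusion.

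Concretely: the right-normed $m$-fold self-bracket $[[\cdots[N,N],N],\dots,N]$ is precisely the lower-central term $N^{m}$, and the lower central series is \emph{decreasing}: $N^{a}\subseteq N^{b}$ whenever $a\geq b$. Hence for $2^{t}\geq n$ one has $N^{2^{t}}\subseteq N^{n}$, not the other way round. Combined with the standard fact $N^{[t+1]}\subseteq N^{2^{t}}$ (which you state correctly), this gives in one line
\[
N^{[t+1]}\;\subseteq\;N^{2^{t}}\;\subseteq\;N^{n}\;=\;[N,N,\dots,N]_{r},
\]
which is exactly the auxiliary claim. Your sentence ``the $n$-fold right-normed bracket \emph{contains} the $2^{t}$-fold one is false'' is therefore mistaken: it \emph{does} contain it, trivially, and that is the whole point. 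The subsequent attempt to salvage the argument (``$N^{[t]}\subseteq(\text{$n$-fold bracket})$ when $2^{t-1}\le n$, secured by $2^{t}\geq n$'') has both the inequality backwards and an off-by-one in the derived index; and $2^{t}\geq n$ does not imply $2^{t-1}\le n$ anyway.

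Two minor points: you once write the claim as ``contains $N^{[t]}$'' where you need $N^{[t+1]}$; and the identity $(M^{[j]})^{[t+1]}=M^{[j+t]}$ you invoke at the end is the standard re-indexing of iterated derived series and is fine. With the single correction above --- $N^{2^{t}}\subseteq N^{n}$ for $2^{t}\geq n$ --- your proof collapses to the paper's: for $k=1$,
\[
M^{[t+1]}\subseteq M^{2^{t}}\subseteq M^{n}=\mathcal{L}_{n}^{[2]}(M),
\]
and for the inductive step, with $N=M^{[tk+1]}\subseteq\mathcal{L}_{n}^{[k+1]}(M)$,
\[
M^{[t(k+1)+1]}=(M^{[tk+1]})^{[t+1]}\subseteq N^{2^{t}}\subseteq N^{n}\subseteq[\mathcal{L}_{n}^{[k+1]}(M),\dots,\mathcal{L}_{n}^{[k+1]}(M)]_{r}=\mathcal{L}_{n}^{[k+2]}(M).
\]
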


\begin{proof} Since $M^{[p]} \subseteq M^{[p+q]}$ for any $p, q \in
\mathbb{N},$ it is sufficient to prove embedding for the minimal
$t$ such that $2^t \geq n.$

We shall use induction. If $n=3$ then $t=2.$

For $k=1$ we have $$M^{[3]} = [M^{[2]}, M^{[2]}] = [M^{[2]},
[M^{[1]}, M^{[1]}]] \subseteq [[M^{[2]}, M^{[1]}], M^{[1]}]
\subseteq [M^{[1]}, M^{[1]}, M^{[1]}]_r \subseteq
\mathcal{L}_3^{[2]}(M).$$

Suppose that the statement holds for some $k$ and we will prove it for $k+1.$
$$M^{[2(k+1)+1]} = M^{[2k+1 + 2]} = [[M^{[2k+1]}, M^{[2k+1]}] , [M^{[2k+1]}, M^{[2k+1]}]] \subseteq
$$
$$\subseteq [[M^{[2k+1]}, M^{[2k+1]}], M^{[2k+1]}]\subseteq
[\mathcal{L}_3^{[k+1]}(M), \mathcal{L}_3^{[k+1]}(M), \mathcal{L}_3^{[k+1]}(M)]_r = \mathcal{L}_3^{[k+2]}(M).$$

Let us prove the statement for any $n.$

Since $2^t \geq n$ for $k=1$ we get
$$M^{[t+1]} \subseteq M^{2^t} = \underbrace{[[M^{[1]}, M^{[1]}], \dots , M^{[1]}]}\limits_{2^t - times}
\subseteq \underbrace{[[M^{[1]}, M^{[1]}], \dots , M^{[1]}]}\limits_{n - times}
 = \mathcal{L}_n^{[2]}(M).$$

The following chain equalities and inclusions
$$M^{[t(k+1)+1]} = M^{[tk+1 + t]} = (M^{[tk+1]})^{[t+1]} \subseteq (M^{[tk+1]})^{2^t} =
\underbrace{[[[M^{[tk+1]}, M^{[tk+1]}], \dots ,
M^{[tk+1]}]}\limits_{2^t - times}\subseteq$$
$$
\subseteq \underbrace{[[M^{[tk+1]}, M^{[tk+1]}], \dots , M^{[tk+1]}]}\limits_{n - times}
 \subseteq \underbrace{[[\mathcal{L}_n^{[k+1]}(M), \mathcal{L}_n^{[k+1]}(M)], \dots , \mathcal{L}_n^{[k+1]}(M)]}\limits_{n - times}
= \mathcal{L}_n^{[k+2]}(M)$$ complete the proof of the lemma.
\end{proof}
\medskip

Further we shall need the following lemma.

\begin{lem}\label{l4} $M^{nk-k+1} = \mathcal{L}_n^{k+1}(M).$
\end{lem}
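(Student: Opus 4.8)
\textbf{Proof plan for Lemma~\ref{l4}.}
The statement $M^{nk-k+1}=\mathcal{L}_n^{k+1}(M)$ is an equality of two descending ideal filtrations, and the plan is to prove both inclusions by induction on $k$, unwinding the definitions of the two sequences. For the base case $k=1$ one has $\mathcal{L}_n^{2}(M)=[\mathcal{L}_n^{1}(M),M,\dots,M]_r=[M,M,\dots,M]_r=[[\cdots[[M,M],M]\cdots],M]$ with $n$ slots, which by the definition of the lower central series (recall $M^{s+1}=[M^s,M^1]$) is exactly $M^{n}=M^{n\cdot 1-1+1}$; so equality holds on the nose in the base case. This already signals that, unlike Lemmas~\ref{l11} and~\ref{l3}, here we should get honest equalities rather than mere containments, because the right $n$-ary bracket $[-,\dots,-]_r$ is built by iterating the binary bracket strictly on the right, which is precisely how $M^k$ is generated.

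For the inductive step, assume $M^{nk-k+1}=\mathcal{L}_n^{k+1}(M)$ and compute
$$\mathcal{L}_n^{k+2}(M)=[\mathcal{L}_n^{k+1}(M),M,\dots,M]_r=[[\cdots[[\mathcal{L}_n^{k+1}(M),M],M]\cdots],M],$$
with $M$ appearing $n-1$ times after the first slot. Using the inductive hypothesis this equals $[[\cdots[[M^{nk-k+1},M],M]\cdots],M]$ with $n-1$ trailing copies of $M$. Now I would repeatedly apply the identity $[N,M]=N^{+1}$-type bookkeeping: more precisely, for any ideal $N$ of $L$ one has $[N,M]\subseteq N$ (ideal) but in terms of the specific generators, $[M^{s},M]=M^{s+1}$ by definition of the lower central series, and more generally $[M^{s},M^{1}]=M^{s+1}$, so iterating $n-1$ times gives $M^{(nk-k+1)+(n-1)}=M^{nk-k+n}=M^{n(k+1)-(k+1)+1}$, which is the desired right-hand side for $k+1$. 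The one point that needs care here is justifying that bracketing an arbitrary term $M^{s}$ on the right by $M$ yields exactly $M^{s+1}$ and not just a subset: this is immediate from $M^{s+1}:=[M^s,M^1]=[M^s,M]$, so the chain is a chain of genuine equalities, and the induction closes.

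The main obstacle — really the only thing to be careful about — is matching the index arithmetic: one must check that $n$ slots in $\mathcal{L}_n^{k+1}$ correspond to $n-1$ successive binary brackets added on top of $\mathcal{L}_n^{k}$, and that this bumps the lower-central-series index by exactly $n-1$ each time, so that after $k$ such steps starting from $M^{n}$ one lands at $M^{n+(k-1)(n-1)}=M^{nk-k+1}$. It is also worth remarking explicitly that, in contrast to Lemmas~\ref{l11} and~\ref{l3}, no power-of-two detour is needed because the lower central series already iterates the binary bracket on the right one factor at a time, which is structurally identical to how $[-,\dots,-]_r$ is defined; hence the inclusions in both directions are simultaneously visible and we get equality. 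I would write the whole argument in two or three short displayed lines of the form above, each annotated with "by definition of $\mathcal{L}_n^{\bullet}$" and "by definition of $M^{\bullet}$".
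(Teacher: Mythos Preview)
Your proposal is correct and follows essentially the same route as the paper: induction on $k$, with the base case $\mathcal{L}_n^{2}(M)=M^{n}$ obtained by unfolding the $n$-ary bracket, and the inductive step by writing $\mathcal{L}_n^{k+2}(M)=[\mathcal{L}_n^{k+1}(M),M,\dots,M]_r$, substituting $\mathcal{L}_n^{k+1}(M)=M^{nk-k+1}$, and using $[M^{s},M]=M^{s+1}$ a total of $n-1$ times to reach $M^{n(k+1)-(k+1)+1}$. Your explicit remark that these are genuine equalities (not mere inclusions) because $M^{s+1}:=[M^{s},M]$ is a useful clarification that the paper leaves implicit.
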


\begin{proof} The proof goes again by induction on $k$ for any $n.$

If $k=1,$ then
$$M^{n} = \underbrace{[\dots [[M, M], M], \dots, M]}\limits_{n - times}
= [M, M, \dots, M]_r = \mathcal{L}_n^{2}(M).$$

Applying induction  in the equalities
$$M^{n(k+1)-k-1+1} = M^{nk-k+1+n-1} = [\dots[[M^{nk-k+1}, \underbrace{M],M], \dots, M]}\limits_{n-1 -
times}=
$$
$$=[M^{nk-k+1}, M, \dots, M]_r =[\mathcal{L}_n^{k+1}(M), M, \dots,
M]_r = \mathcal{L}_n^{k+2}(M)
$$
we complete the proof of the lemma. \end{proof}

We denote by

$R-$ solvable radical of $L,$ i.e. the maximal solvable ideal of
the Leibniz algebra $L;$
\smallskip

$R_n-$ $n$-solvable radical of $L,$ i.e. the maximal $n$-solvable
ideal of the $n$-ary algebra $\mathcal{L}_n(L);$
\smallskip

$N-$ nilradical of $L,$ i.e. the maximal nilpotent ideal of the
Leibniz algebra $L;$
\smallskip

$N_n-$  $n$-nilradical of $L,$ i.e. the maximal $n$-nilpotent
ideal of the $n$-ary algebra $\mathcal{L}_n(L).$

\begin{prop}\label{p1} For a Leibniz algebra $L$ we have $R = R_n.$
\end{prop}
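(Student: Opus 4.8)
The plan is to show the two radicals coincide by proving each contains the other, using the two families of inclusions relating the binary-derived series $M^{[k]}$ to the $n$-ary derived series $\mathcal{L}_n^{[k]}(M)$ established in Lemma~\ref{l11} and Lemma~\ref{l3}. The key observation is that solvability of an ideal $M$ as a Leibniz algebra and $n$-solvability of $M$ as an ideal of $\mathcal{L}_n(L)$ are \emph{equivalent} conditions on the same subspace, so the only real issue is matching up the notions of ``ideal'' and checking that the maximal such ideals are forced to be the same subspace.

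First I would show $R \subseteq R_n$. Since $R$ is a solvable ideal of $L$, it is in particular a subalgebra, and by Lemma~\ref{l11} we have $\mathcal{L}_n^{[k]}(R) \subseteq R^{[k]}$ for all $k$; as $R^{[q]} = 0$ for some $q$, this gives $\mathcal{L}_n^{[q]}(R) = 0$, so $R$ is $n$-solvable. It remains to check $R$ is an $n$-ideal of $L$, i.e. $\sum_{i=1}^n [L,\dots,R,\dots,L]_r \subseteq R$; but $[x_1,\dots,x_n]_r$ is an iterated binary bracket, and since $R$ is a two-sided ideal of $L$ (right Leibniz ideals are automatically two-sided, or one uses that $R$ absorbs brackets on both sides), any such iterated bracket with one entry in $R$ lands in $R$. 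Hence $R$ is an $n$-solvable $n$-ideal, so $R \subseteq R_n$ by maximality.

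Conversely, for $R_n \subseteq R$: by definition $\mathcal{L}_n^{[q]}(R_n) = 0$ for some $q$, and by Lemma~\ref{l3} applied with $M = R_n$ we have $R_n^{[tq'+1]} \subseteq \mathcal{L}_n^{[q'+1]}(R_n)$ for a suitable $t$ with $2^t \ge n$; taking $q'$ large enough (say $q' = q$) forces $R_n^{[tq+1]} = 0$, so $R_n$ is solvable as a Leibniz algebra. One must also verify $R_n$ is an ordinary ideal of $L$: here I would note that an $n$-ideal $I$ satisfies $[I,L,\dots,L]_r \subseteq I$ and $[L,I,L,\dots,L]_r \subseteq I$, which for $n = 2$ already says $[I,L] \subseteq I$ and $[L,I]\subseteq I$; for general $n$ one extracts the same two-sided absorption by specializing the trailing entries. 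Thus $R_n$ is a solvable ideal of $L$, so $R_n \subseteq R$ by maximality of $R$, and the two inclusions give $R = R_n$.

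**The main obstacle** I anticipate is not the solvability bookkeeping — Lemmas~\ref{l11} and~\ref{l3} do that work — but rather the compatibility of the two ideal notions, i.e. checking that an $n$-ideal of $L$ is an ideal of $L$ and vice versa. This requires being slightly careful about which entry of $[x_1,\dots,x_n]_r = [[[x_1,x_2],x_3]\cdots,x_n]$ is allowed to range over the ideal and about the fact that in a (right) Leibniz algebra a subspace closed under right multiplication by $L$ need not a priori be closed under left multiplication; one resolves this by using that the relevant radicals are honest two-sided ideals and that every iterated right-normed bracket with an ideal entry collapses into the ideal. Once that is pinned down, the proof is just the two maximality arguments above.
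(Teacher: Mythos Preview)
Your inclusion $R \subseteq R_n$ is fine and matches the paper's argument. The gap is in the reverse inclusion, precisely at the point you flagged as the main obstacle: the claim that an $n$-ideal of $L$ is automatically a two-sided ideal of $L$ cannot be established by ``specializing the trailing entries.'' Knowing $[I,L,\dots,L]_r = [[\cdots[[I,L],L],\dots],L] \subseteq I$ only tells you that $[I,L]$ lands in $I$ \emph{after} $n-2$ further right multiplications, and there is no way to strip those off. In fact the general implication is simply false: if $L$ is any Leibniz algebra with $L^n = [[\cdots[L,L],\dots],L] = 0$ then $[L,\dots,L]_r = 0$, so \emph{every} subspace of $L$ is an $n$-ideal, while of course not every subspace is an ordinary ideal. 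So the step ``$R_n$ is an $n$-ideal $\Rightarrow$ $R_n$ is an ideal of $L$'' fails as stated, and with it the maximality argument for $R_n \subseteq R$.

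The paper circumvents this by invoking Levi's theorem (Theorem~\ref{tbar}): write $L = S \dot{+} R$ with $S$ semisimple Lie and let $\pi \colon L \to S$ be the quotient by $R$. Since $R_n$ is an $n$-ideal one has $[L,\dots,L,R_n]_r \subseteq R_n$, and applying $\pi$ gives $[[\cdots[S,S],\dots,S],\pi(R_n)] \subseteq \pi(R_n)$. The crucial point is that $S$ is a \emph{semisimple Lie} algebra, so $[S,S] = S$ and the iterated bracket collapses to $[S,\pi(R_n)] \subseteq \pi(R_n)$; anti-commutativity in $S$ then makes $\pi(R_n)$ a genuine ideal of $S$. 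Being a solvable ideal of a semisimple algebra forces $\pi(R_n) = 0$, i.e.\ $R_n \subseteq R$. In other words, the passage from ``$n$-ideal'' to ``ideal'' is carried out not in $L$ but in the semisimple quotient, where perfectness of $S$ does the work that your ``specialization'' cannot.
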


\begin{proof}  Lemma \ref{l11} implies that any solvable ideal of $L$
is also $n$-solvable. Therefore, it is sufficient to prove the
inclusion $R_n \subseteq R.$ From Lemma \ref{l3} it follows that
$R_n$ is a solvable subalgebra of $L$. Thus, we need to
prove that $R_n$ is an ideal of $L.$ According to Theorem \ref{tbar},
we can write $L = S \oplus R,$ where $S$ is a simple Lie algebra,
$R$ is a solvable ideal. Let $\pi: L \rightarrow S$ be the natural
quotient map.

Since $\pi$ is a morphism of $L,$ we have
$$\pi([L, L, \dots, L, R_n]_r) = \pi([[[[L,L], L], \dots, L], R_n]) =$$$$=
[[[[\pi(L),\pi(L)], \pi(L)], \dots, \pi(L)], \pi(R_n)] = [[[[S,S],
S], \dots, S], \pi(R_n)] = [S, \pi(R_n)].$$ On the other hand,
$$\pi([L, L, \dots, L, R_n]_r) \subseteq \pi(R_n).$$

Hence, $[S, \pi(R_n)] \subseteq \pi(R_n).$ Taking into account
that $S$ is a Lie algebra we obtain $[\pi(R_n), S] \subseteq
\pi(R_n).$ Therefore, $\pi(R_n)$ is an ideal of $S.$ Since $R_n$
is an $n$-solvable ideal of $L,$ $\pi(R_n)$ is an $n$-solvable ideal of
$S$, consequently $\pi(R_n)$ is a solvable ideal (because $\pi(R_n)$
is an ideal of $S$).

Due to semisimplicity of $S$ we get $\pi(R_n) = 0,$ which implies
$R_n \subseteq R.$
\end{proof}

\begin{lem} \label{inclusion} Let $I$ be an ideal of the
Leibniz algebra $L$ and  $d \in LDer_n(L)$ a Leibniz-derivation for
 some $n \in \mathbb{N}.$
Then $$\mathcal{L}_n^{[k]}(d(I))
\subseteq I+d^{n^{k-1}}(\mathcal{L}_n^{[k]}(I))$$ for all $k\in \mathbb{N}.$
\end{lem}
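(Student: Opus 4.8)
The plan is to prove both inclusions $\mathcal{L}_n^{[k]}(d(I)) \subseteq I + d^{n^{k-1}}(\mathcal{L}_n^{[k]}(I))$ by induction on $k$. The base case $k=1$ is trivial since $\mathcal{L}_n^{[1]}(d(I)) = d(I) = d(\mathcal{L}_n^{[1]}(I))$ and $n^{0}=1$, so even without the extra summand $I$ we get equality. For the inductive step, I would assume the inclusion holds for $k$ and compute $\mathcal{L}_n^{[k+1]}(d(I)) = [\mathcal{L}_n^{[k]}(d(I)), \dots, \mathcal{L}_n^{[k]}(d(I))]_r$ by substituting the inductive hypothesis into each of the $n$ slots. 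Expanding the $n$-ary bracket by multilinearity, each resulting term is an $n$-ary product $[z_1, \dots, z_n]_r$ where each $z_j$ is either an element of $I$ or an element of $d^{n^{k-1}}(\mathcal{L}_n^{[k]}(I))$.

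The key observation is a dichotomy. If \emph{at least one} slot $z_j$ lies in $I$, then since $I$ is an ideal of $L$ (hence an $n$-ideal by the remark following the definition of $n$-ideal, using that all other slots lie in $L$), the whole product lies in $I$; this contributes to the $I$ summand. If \emph{every} slot lies in $d^{n^{k-1}}(\mathcal{L}_n^{[k]}(I))$, I want to pull all the $d$'s out in front using the Leibniz-derivation property. Here I would invoke Lemma \ref{derivacion}, formula (3.1): applying $d^{m}$ to an $n$-ary bracket expands as a sum over compositions $i_1 + \cdots + i_n = m$ of multinomial-weighted brackets $[d^{i_1}(x_1), \dots, d^{i_n}(x_n)]_r$. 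Taking $m = n \cdot n^{k-1} = n^{k}$ and the distinguished composition $i_1 = \cdots = i_n = n^{k-1}$, the term $[d^{n^{k-1}}(x_1), \dots, d^{n^{k-1}}(x_n)]_r$ appears in the expansion of $d^{n^{k}}([x_1,\dots,x_n]_r)$; every other term in that expansion has some $i_j > n^{k-1}$ and hence some $i_{j'} < n^{k-1}$, so a bit of care is needed. The cleanest route is: the product of $n$ elements each in $d^{n^{k-1}}(\mathcal{L}_n^{[k]}(I))$ can be written, modulo terms where a derivation of order $< n^{k-1}$ hits some $\mathcal{L}_n^{[k]}(I)$-element (and such lower-order-derivative terms, upon expansion, eventually land in $I$ or in $d^{n^{k-1}}$ applied to deeper brackets), as $d^{n^{k}}$ applied to $[\mathcal{L}_n^{[k]}(I), \dots, \mathcal{L}_n^{[k]}(I)]_r = \mathcal{L}_n^{[k+1]}(I)$. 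Since $n^{(k+1)-1} = n^{k}$, this is exactly the required summand $d^{n^{(k+1)-1}}(\mathcal{L}_n^{[k+1]}(I))$.

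The main obstacle I anticipate is the bookkeeping in the all-slots-in-$d^{n^{k-1}}(\cdot)$ case: formula (3.1) expresses $d^{n^k}$ of a bracket as the \emph{desired} term plus a sum of other terms, so I must argue that the other terms are already accounted for — either they lie in $I$ (because a slot carrying fewer than $n^{k-1}$ derivatives, being of the form $d^{i}(\mathcal{L}_n^{[k]}(I))$ with $i < n^{k-1}$, can be re-absorbed via a downward induction, since $\mathcal{L}_n^{[k]}(I) \subseteq I$ by Lemma \ref{l11}-type reasoning, so such a slot lies in $I$ and the whole bracket lies in the ideal $I$), or they lie in $d^{n^k}(\mathcal{L}_n^{[k+1]}(I)) \subseteq I + d^{n^k}(\mathcal{L}_n^{[k+1]}(I))$. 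In fact the slick statement is: $[d^{n^{k-1}}(a_1),\dots,d^{n^{k-1}}(a_n)]_r \equiv d^{n^k}([a_1,\dots,a_n]_r) \pmod{I}$ for $a_j \in \mathcal{L}_n^{[k]}(I)$, because by (3.1) the difference is a sum of brackets each having at least one factor of the form $d^{i}(a_j)$ with $i < n^{k-1}$; since $a_j \in \mathcal{L}_n^{[k]}(I) \subseteq I$ and $I$ is an ideal invariant under no assumption on $d$ — wait, $d^i(a_j)$ need not lie in $I$. The correct fix is to note $d^i(a_j) \in d^i(I) \subseteq I + (\text{lower brackets})$ by the very statement being proven at smaller parameters, which is why the induction must be organized carefully, possibly with a secondary induction on the total derivation order, or by proving a slightly stronger auxiliary claim $d^i(\mathcal{L}_n^{[k]}(I)) \subseteq I + d^{?}(\mathcal{L}_n^{[k]}(I))$ first. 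I would isolate this as a preliminary sublemma to keep the main induction clean.
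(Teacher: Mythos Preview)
Your plan—induct on $k$, expand the $n$-fold bracket by multilinearity, then rearrange via formula~(3.1)—is exactly the paper's argument, and your base case and your ``if some slot lies in $I$ then the bracket lies in $I$'' reduction match it verbatim. Where you hesitate is precisely where the paper is terse: after the multilinear expansion, the surviving term is $[d^{n^{k-1}}(v_1),\dots,d^{n^{k-1}}(v_n)]_r$ with $v_j\in\mathcal{L}_n^{[k]}(I)$, and the paper simply asserts this lies in $I+d^{n^k}(\mathcal{L}_n^{[k+1]}(I))$, citing~(3.1) again without further comment.

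Your concern about that step is legitimate. For $k=2$ the passage is genuinely clean, and the missing idea in your sketch is just pigeonhole: in~(3.1) with exponent $n$, every composition $i_1+\cdots+i_n=n$ other than $(1,\dots,1)$ must have some $i_j=0$, so that summand has a raw slot in $\mathcal{L}_n^{[1]}(I)=I$ and falls into the ideal. But in the inductive step, with exponent $n^k>n$, compositions with all $i_j\ge 1$ yet not all equal to $n^{k-1}$ do exist (e.g.\ $(1,3)$ when $n=k=2$), and for those the slot $d^{i_j}(v_j)$ with $0<i_j<n^{k-1}$ need not lie in $I$, since $d$ is not assumed to preserve $I$. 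Your proposed repairs do not close this: the lemma at ``smaller parameters'' controls $\mathcal{L}_n^{[k]}(d(I))$, not $d^{i}(\mathcal{L}_n^{[k]}(I))$, so there is nothing to feed back in. The paper's own proof does not dispose of these middle terms either. In sum, you have reproduced the paper's argument and correctly located its unjustified step; neither you nor the paper supplies the missing justification for the stated exponent $n^{k-1}$. (For what it is worth, the only downstream use of the lemma needs merely the corollary ``$\mathcal{L}_n^{[s]}(I)=0\Rightarrow\mathcal{L}_n^{[s]}(d(I))\subseteq I$'', for which the precise power of $d$ is irrelevant.)
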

\begin{proof} Evidently $d(I)\subseteq
I+d(I)$ holds. For $k=2$, using (3.1), we have
$$\mathcal{L}_n^{[2]}(d(I)) = [d(I), d(I), \dots, d(I)]_r
\subseteq d([I, I, \dots, I]_r) + $$
$$+\sum_{\begin{array}{c} i_1+i_2+\cdots+i_n=n\\\exists i_j=0\end{array}}\frac{n!}{i_1!i_2!\dots
i_n!}[d^{i_1}(I), \dots, d^{i_j-1}(I), I, d^{i_j+1}(I),
\dots,d^{i_n}(I)]_r\subseteq$$
$$\subseteq I + d^{n}(\mathcal{L}_n^{[2]}(I)).$$

\smallskip

Assume that $\mathcal{L}_n^{[k]}(d(I)) \subseteq
I+d^{n^{k-1}}(\mathcal{L}_n^{[k]}(I)).$ Again using (3.1),
we verify the inclusion for $k+1:$

$$\mathcal{L}_n^{[k+1]}(d(I)) = [\mathcal{L}_n^{[k]}(d(I)), \mathcal{L}_n^{[k]}(d(I)), \dots, \mathcal{L}_n^{[k]}(d(I))]_r
 \subseteq$$
 $$ \subseteq [I+d^{n^{k-1}}(\mathcal{L}_n^{[k]}(I)), I+d^{n^{k-1}}(\mathcal{L}_n^{[k]}(I)), \dots, I+d^{n^{k-1}}(\mathcal{L}_n^{[k]}(I))]_r
\subseteq$$
 $$  \subseteq I + d^{n^{k}}([\mathcal{L}_n^{[k]}(I), \mathcal{L}_n^{[k]}(I), \dots, \mathcal{L}_n^{[k]}(I)]_r)
 \subseteq I + d^{n^{k}}(\mathcal{L}_n^{[k+1]}(I))).
$$

\end{proof}

\begin{thm}\label{t7} Let $R$ be the solvable radical of a Leibniz
algebra $L$ over a field of characteristic zero. Then $d(R)
\subseteq R$ for any $d \in LDer_n(L).$
\end{thm}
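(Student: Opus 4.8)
The plan is to carry the whole argument inside the $n$-ary algebra $\mathcal{L}_n(L)$ and to use Proposition \ref{p1}, which identifies the solvable radical $R$ of $L$ with the $n$-solvable radical $R_n$ of $\mathcal{L}_n(L)$. Fix $n\in\mathbb{N}$ and $d\in LDer_n(L)$, and put $W=R+d(R)$. Since $R=R_n$ is the largest $n$-solvable ideal of $\mathcal{L}_n(L)$, it suffices to show that $W$ is an $n$-solvable ideal of $\mathcal{L}_n(L)$; then $d(R)\subseteq W\subseteq R_n=R$.

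First I would check that $W$ is an ideal of $\mathcal{L}_n(L)$, i.e. an $n$-ideal of $L$. Because $R$ is an ideal of $L$, any product $[x_1,\dots,x_n]_r$ having at least one entry in $R$ already lies in $R$; in particular $R$ itself is an $n$-ideal, so the summand $R$ of $W$ causes no trouble. For an entry of the form $d(r)$ with $r\in R$ I would invoke the defining identity of the Leibniz-derivation $d$ with $r$ placed in the $i$-th slot:
$$d\big([x_1,\dots,r,\dots,x_n]_r\big)=[x_1,\dots,d(r),\dots,x_n]_r+\sum_{j\ne i}[x_1,\dots,d(x_j),\dots,r,\dots,x_n]_r .$$
The left-hand side lies in $d(R)$ since $[x_1,\dots,r,\dots,x_n]_r\in R$, and every summand on the right with $j\ne i$ still carries $r\in R$ in the $i$-th slot, hence lies in $R$; therefore $[x_1,\dots,d(r),\dots,x_n]_r\in d(R)+R=W$. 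By linearity in each argument, $W$ is an $n$-ideal.

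Next I would prove that $W$ is $n$-solvable. An easy induction on $k$ — expanding $[\,R+\mathcal{L}_n^{[k]}(d(R)),\dots,R+\mathcal{L}_n^{[k]}(d(R))\,]_r$ multilinearly and noting that every term with an argument in $R$ lands in $R$ — gives $\mathcal{L}_n^{[k]}(W)\subseteq R+\mathcal{L}_n^{[k]}(d(R))$ for all $k$. Applying Lemma \ref{inclusion} to the ideal $I=R$ yields $\mathcal{L}_n^{[k]}(d(R))\subseteq R+d^{\,n^{k-1}}\big(\mathcal{L}_n^{[k]}(R)\big)$, and Lemma \ref{l11} gives $\mathcal{L}_n^{[k]}(R)\subseteq R^{[k]}$. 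Since $R$ is solvable there is $q$ with $R^{[q]}=0$, so for $k\ge q$ we obtain $\mathcal{L}_n^{[k]}(d(R))\subseteq R$ and hence $\mathcal{L}_n^{[q]}(W)\subseteq R$. From here a further short induction, using $\mathcal{L}_n^{[s+1]}(M)=[\mathcal{L}_n^{[s]}(M),\dots,\mathcal{L}_n^{[s]}(M)]_r$ together with monotonicity of this operation and $R^{[s+1]}\subseteq R^{[s]}$, shows $\mathcal{L}_n^{[q+t]}(W)\subseteq R^{[t+1]}$, whence $\mathcal{L}_n^{[2q-1]}(W)=0$. Thus $W$ is an $n$-solvable $n$-ideal of $\mathcal{L}_n(L)$, and by the reduction above $d(R)\subseteq R$.

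I expect the main obstacle to be precisely the step that $W=R+d(R)$ is an $n$-ideal: $W$ need not be closed under the binary bracket of $L$, so the argument cannot be run in $L$ itself, and one is forced to pass to $\mathcal{L}_n(L)$ and to lean on the identification $R=R_n$ from Proposition \ref{p1} to make the estimates close up. The remaining work is routine bookkeeping with the two derived series of the $n$-ary algebra, powered by Lemmas \ref{l11} and \ref{inclusion}.
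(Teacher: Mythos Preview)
Your proposal is correct and follows essentially the same route as the paper: identify $R$ with the $n$-solvable radical $R_n$ via Proposition~\ref{p1}, show that $W=R+d(R)$ is an $n$-ideal using the Leibniz-derivation identity, apply Lemma~\ref{inclusion} to get $\mathcal{L}_n^{[s]}(W)\subseteq R$, and then iterate to obtain $n$-solvability of $W$. The only cosmetic difference is that the paper works directly with the $n$-derived series of $R_n$ (using $\mathcal{L}_n^{[s]}(R_n)=0$ and the identity $\mathcal{L}_n^{[2s-1]}(W)\subseteq \mathcal{L}_n^{[s]}(\mathcal{L}_n^{[s]}(W))$), whereas you detour through the ordinary derived series $R^{[k]}$ via Lemma~\ref{l11}; both arrive at $\mathcal{L}_n^{[2q-1]}(W)=0$.
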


\begin{proof} Let $d$ be a Leibniz-derivation of order $n.$
Due to Proposition \ref{p1}, $R=R_n,$ so it is enough to prove the
assertion of the Theorem for $R_n.$

Since $R_n$ is a $n$-solvable radical, there exists $s\in
\mathbb{N}$ such that $\mathcal{L}_n^{[s]}(R_n)=0.$ Then by Lemma
\ref{inclusion}, $\mathcal{L}_n^{[s]}(d(R_n)) \subseteq
R_n+d^{n^{s-1}}(\mathcal{L}_n^{[s]}(R_n))=R_n.$ Thus, we have
$\mathcal{L}_n^{[s]}(R_n + d(R_n)) \subseteq R_n.$

Further,
$$\mathcal{L}_n^{[2s-1]}(R_n + d(R_n)) \subseteq
\mathcal{L}_n^{[s]}(\mathcal{L}_n^{[s]}(R_n + d(R_n))) \subseteq
\mathcal{L}_n^{[s]}(R_n) =0.$$

The $n$-ideal property of $R_n +d(R_n)$ follows from the following equalities: $$[l_1,
\dots,l_i+d(l_i),\dots ,l_n]_r=[l_1, \dots,l_i,\dots ,l_n]_r+[l_1,
\dots,d(l_i),\dots ,l_n]_r=$$ $$[l_1, \dots,l_i,\dots ,l_n]_r+d([l_1,
\dots,l_i,\dots,l_n]_r)- \sum_{j=1, j\neq i}^n
[l_1,\dots ,d(l_j),\dots ,l_n]_r.$$

Hence $R_n +d(R_n)$ is an $n$-solvable ideal of the Leibniz algebra
$L.$ Since $R_n$ is the $n$-solvable radical of $L,$ it follows
that $R_n + d(R_n) \subseteq R_n$, therefore $d(R_n)\subseteq
R_n.$
\end{proof}

\begin{lem} \label{inclusion2} Let $I$ be an ideal of the
Leibniz algebra $L$ and $d \in LDer_n(L)$ a Leibniz-derivation
 for some $n \in \mathbb{N}.$
Then $$\mathcal{L}_n^{k}(d(I))
\subseteq I+d^{kn-k+1}(\mathcal{L}_n^{k}(I))$$ for all $k\in \mathbb{N}.$
\end{lem}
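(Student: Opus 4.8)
The plan is to mimic the proof of Lemma~\ref{inclusion} almost verbatim, replacing the derived sequence $\mathcal{L}_n^{[\cdot]}$ by the lower central sequence $\mathcal{L}_n^{\cdot}$ and keeping careful track of the powers of $d$ that arise. The base case $k=1$ is just $d(I)\subseteq I+d(I)$. For $k=2$ we expand $\mathcal{L}_n^{2}(d(I))=[d(I),d(I),\dots,d(I)]_r$ and apply the formula (3.1) to $d([I,I,\dots,I]_r)$: the full term $d([I,\dots,I]_r)=[d(I),\dots,d(I)]_r + (\text{lower-order terms})$, where each lower-order term has at least one slot carrying $I$ (i.e. $d^{0}$) and hence lies in $I$ because $I$ is an ideal, while the leading term has been replaced; solving for $[d(I),\dots,d(I)]_r$ gives $\mathcal{L}_n^{2}(d(I))\subseteq I+d^{n}(\mathcal{L}_n^{2}(I))$, and $n=2\cdot n-2+1$, matching the claimed exponent $kn-k+1$ at $k=2$.

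For the inductive step, assume $\mathcal{L}_n^{k}(d(I))\subseteq I+d^{kn-k+1}(\mathcal{L}_n^{k}(I))$. Then
$$\mathcal{L}_n^{k+1}(d(I))=[\mathcal{L}_n^{k}(d(I)),d(I),\dots,d(I)]_r\subseteq [I+d^{kn-k+1}(\mathcal{L}_n^{k}(I)),\,I+d(I),\dots,I+d(I)]_r.$$
Expanding the multilinear bracket: any summand with $I$ in some slot lies in $I$ (ideal property, noting $\mathcal{L}_n^{k}(I)\subseteq I$); the remaining summand is $[d^{kn-k+1}(\mathcal{L}_n^{k}(I)),d(I),\dots,d(I)]_r$. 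On this surviving term I want to push the total of $kn-k+1$ powers of $d$ on the first argument plus $1$ power on each of the other $n-1$ arguments outside the bracket using (3.1) run in reverse, which costs replacing each interior $d(I)$ by $I$ in the error terms (again absorbed into $I$) and yields $d^{(kn-k+1)+(n-1)}(\mathcal{L}_n^{k+1}(I))$. Since $(kn-k+1)+(n-1)=(k+1)n-(k+1)+1$, this is exactly the exponent required for $k+1$, so the induction closes.

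The one point needing care — and the likely main obstacle — is making the ``reverse (3.1)'' manipulation on the surviving term fully rigorous: (3.1) expresses $d^{m}$ of a bracket as a sum over compositions of $m$, and here we have a bracket whose arguments already carry different powers of $d$ (namely $d^{kn-k+1}$ on the first, $d^{1}$ on the others), so one must first argue that $[d^{kn-k+1}(a),d(b_2),\dots,d(b_n)]_r$ appears as one term in the expansion of $d^{(kn-k+1)+(n-1)}([a,b_2,\dots,b_n]_r)$ and that every other term in that expansion has $d^{0}=\mathrm{id}$ in at least one slot (so lies in $I$). This is the same bookkeeping as in the $k=2$ case, just with a uniform shift of the composition indices, and it works because the ideal property lets us discard precisely the terms where some argument is left underived. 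I would state this absorption principle once (``any $r$-ary bracket $[\cdots]_r$ one of whose entries lies in $I$ is contained in $I$'') and then invoke it throughout, exactly as the proof of Lemma~\ref{inclusion} implicitly does, so that the write-up is a short induction parallel to that lemma rather than a fresh computation.
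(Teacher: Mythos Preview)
Your proof follows the paper's almost word for word: the paper also disposes of $k=1$ as ``obvious'', treats $k=2$ via formula~(3.1), and in the inductive step passes directly from $[I+d^{kn-k+1}(\mathcal{L}_n^{k}(I)),\,d(I),\dots,d(I)]_r$ to $I + d^{kn-k+n}([\mathcal{L}_n^{k}(I),I,\dots,I]_r)$ without further comment. Two points are worth flagging. First, your equation ``$n = 2\cdot n - 2 + 1$'' is an arithmetic slip (it forces $n=1$); the $k=2$ computation actually yields exponent $n$, not the $2n-1$ that the formula $kn-k+1$ demands, and likewise the $k=1$ statement you and the paper call trivial is $d(I)\subseteq I+d(I)$, whereas the formula asks for $d(I)\subseteq I+d^{n}(I)$. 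The paper's proof carries exactly the same mismatches.

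Second, and more substantive: your assertion that in the (3.1)-expansion of $d^{m+n-1}([a,b_2,\dots,b_n]_r)$ (with $m=kn-k+1$) ``every other term has $d^{0}=\mathrm{id}$ in at least one slot'' is false as soon as $m\ge 2$ and $n\ge 2$. For instance the composition $(m{-}1,2,1,\dots,1)$ of $m+n-1$ into $n$ positive parts gives a term $[d^{m-1}(a),d^{2}(b_2),d(b_3),\dots,d(b_n)]_r$ with no undisturbed $I$-entry, so it is not absorbed by the ideal property. You correctly identified this ``reverse (3.1)'' step as the one needing care, but the specific bookkeeping you propose does not settle it. The paper glides over this same point in silence, so your argument is at the same level of rigor as the original; if you want an airtight version you will need either a secondary induction on the exponent or a weaker conclusion such as $\mathcal{L}_n^{k}(d(I))\subseteq I + \sum_{j\ge 1} d^{j}(\mathcal{L}_n^{k}(I))$, which still suffices for the intended application to the nilradical.
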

\begin{proof} For $k=1$ the assertion of the lemma is obvious. Let
$k=2$, then using the formula (3.1) we have
$$\mathcal{L}_n^{2}(d(I)) = [d(I), d(I), \dots, d(I)]_r \subseteq
d([I, I, \dots, I]_r) + $$
$$+\sum_{\begin{array}{c} i_1+i_2+\cdots+i_n=n\\\exists i_j=0\end{array}}\frac{n!}{i_1!i_2!\dots
i_n!}[d^{i_1}(I), \dots, d^{i_j-1}(I), I, d^{i_j-1}(I), \dots,d^{i_n}(I)]_r\subseteq$$
$$\subseteq I + d^{n}(\mathcal{L}_n^{2}(I)).$$

Assume that $\mathcal{L}_n^{k}(d(I)) \subseteq
I+d^{kn-k+1}(\mathcal{L}_n^{k}(I)).$ Applying the formula (3.1),
we prove the inclusion for $k+1:$
$$\mathcal{L}_n^{k+1}(d(I)) = [\mathcal{L}_n^{k}(d(I)), (d(I)), \dots, (d(I))]_r
\subseteq [I+d^{kn-k+1}(\mathcal{L}_n^{k}(I)), d(I), \dots, d(I)]_r
\subseteq$$
 $$  \subseteq [I + d^{kn-k+1 + n-1}([\mathcal{L}_n^{[k]}(I), I, \dots, I]_r)
 \subseteq I + d^{(k+1)n-k}(\mathcal{L}_n^{k+1}(I))).
$$

\end{proof}

Invariant property of nilradical of a Leibniz algebra under
a Leibniz-derivation is presented in the following theorem.

\begin{thm}\label{t8} Let $N$ be the nilradical of a Leibniz
algebra $L$ over a field of characteristic zero. Then $d(N)
\subseteq N$ for any $d \in LDer_n(L).$
\end{thm}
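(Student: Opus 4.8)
The plan is to mimic the structure of the proof of Theorem \ref{t7}, replacing the lower central--type object $\mathcal{L}_n^{[k]}$ by $\mathcal{L}_n^{k}$ and using Lemma \ref{inclusion2} in place of Lemma \ref{inclusion}. First I would observe that, since any nilpotent ideal is in particular $n$-nilpotent (this is the content of Lemma \ref{l11}, $\mathcal{L}_n^k(M)\subseteq M^k$) and, conversely, an $n$-nilpotent ideal is nilpotent (by Lemma \ref{l4}, $M^{nk-k+1}=\mathcal{L}_n^{k+1}(M)$, so $\mathcal{L}_n^{k+1}(M)=0$ forces $M^{nk-k+1}=0$), the nilradical $N$ coincides with the $n$-nilradical $N_n$. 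This reduces the statement to proving $d(N_n)\subseteq N_n$. It would be worth stating this identification as a short preliminary remark (the analogue of Proposition \ref{p1}), noting that $N_n$ is a genuine $n$-ideal of $\mathcal{L}_n(L)$ and, being $n$-nilpotent, is a nilpotent subalgebra of $L$, hence contained in $N$; the reverse inclusion is immediate.

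Next I would carry out the core argument exactly as in Theorem \ref{t7}. Choose $s\in\mathbb{N}$ with $\mathcal{L}_n^{s}(N_n)=0$. By Lemma \ref{inclusion2} applied to the ideal $I=N_n$,
$$\mathcal{L}_n^{s}(d(N_n))\subseteq N_n+d^{sn-s+1}(\mathcal{L}_n^{s}(N_n))=N_n,$$
and since $\mathcal{L}_n^{s}$ is subadditive on sums of subspaces (each factor in the $n$-ary bracket is either in $N_n$ or in $d(N_n)$, and every term containing at least one $N_n$-factor lands in $N_n$ because $N_n$ is an $n$-ideal) one gets $\mathcal{L}_n^{s}(N_n+d(N_n))\subseteq N_n$. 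Iterating, $\mathcal{L}_n^{2s-1}(N_n+d(N_n))\subseteq\mathcal{L}_n^{s}(\mathcal{L}_n^{s}(N_n+d(N_n)))\subseteq\mathcal{L}_n^{s}(N_n)=0$, so $N_n+d(N_n)$ is $n$-nilpotent.

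It then remains to check that $N_n+d(N_n)$ is an $n$-ideal of $\mathcal{L}_n(L)$, which is the same computation used in Theorem \ref{t7}: for $l_i\in L$,
$$[l_1,\dots,d(l_i),\dots,l_n]_r=d([l_1,\dots,l_n]_r)-\sum_{j\neq i}[l_1,\dots,d(l_j),\dots,l_n]_r,$$
and since $[l_1,\dots,l_n]_r\in L^2\subseteq N$ (here one uses that $L^2$ lies in the nilradical, or more simply that $d$ maps things into $L$ and the other terms already have an $N_n$-slot), each such bracket lies in $N_n+d(N_n)$. By maximality of the $n$-nilradical $N_n$ we conclude $N_n+d(N_n)\subseteq N_n$, i.e. $d(N_n)\subseteq N_n$, and via $N=N_n$ this gives $d(N)\subseteq N$.

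The one genuinely delicate point is the verification that $\mathcal{L}_n^{s}$ behaves well with respect to the decomposition $N_n+d(N_n)$ and, relatedly, that the bracket $[l_1,\dots,l_n]_r$ appearing in the $n$-ideal check indeed lands back inside $N_n+d(N_n)$ rather than merely in $L$: one must be careful that in $\mathcal{L}_n^{s}(N_n+d(N_n))$ the only way to avoid an $N_n$-factor is to take all factors from $d(N_n)$, a term already controlled by the previous step via Lemma \ref{inclusion2}. Once that bookkeeping is made explicit, everything else is formal and parallels Theorem \ref{t7} verbatim.
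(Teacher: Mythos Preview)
Your overall strategy matches the paper's (which simply says ``the proof is similar to the proof of Theorem \ref{t7}''), and the use of Lemma \ref{inclusion2} in place of Lemma \ref{inclusion} is exactly the intended substitution. However, the iteration step contains a genuine error. You assert
\[
\mathcal{L}_n^{\,2s-1}\bigl(N_n+d(N_n)\bigr)\ \subseteq\ \mathcal{L}_n^{\,s}\bigl(\mathcal{L}_n^{\,s}(N_n+d(N_n))\bigr),
\]
but for the lower-central sequence $\mathcal{L}_n^{k}$ this inclusion goes the \emph{wrong} way. From the recursion $\mathcal{L}_n^{k+1}(M)=[\mathcal{L}_n^{k}(M),M,\dots,M]_r$ one gets, with $Y=\mathcal{L}_n^{s}(X)$, that $\mathcal{L}_n^{k}(Y)\subseteq\mathcal{L}_n^{\,s+k-1}(X)$, hence $\mathcal{L}_n^{s}(\mathcal{L}_n^{s}(X))\subseteq\mathcal{L}_n^{\,2s-1}(X)$ and not conversely. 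The step works in Theorem \ref{t7} only because the derived series satisfies the exact identity $\mathcal{L}_n^{[s]}(\mathcal{L}_n^{[s]}(X))=\mathcal{L}_n^{[2s-1]}(X)$; the lower central series has no such self-similarity. Already for $n=2$, $s=2$ your claim would read $X^{3}\subseteq[X^{2},X^{2}]$, which is false in general. And the gap is not merely cosmetic: the implication ``$X^{p}\subseteq N$ with $N^{p}=0$ $\Rightarrow$ $X$ nilpotent'' fails (take $X$ the two-dimensional non-abelian Lie algebra and $N=[X,X]$), so some further use of the specific shape $X=N+d(N)$ together with formula (3.1) is needed to force $\mathcal{L}_n^{r}(X)=0$.

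There is also a smaller slip in your preliminary reduction: you infer $N_n\subseteq N$ from ``$N_n$ is a nilpotent subalgebra'', but $N$ is the maximal nilpotent \emph{ideal}. One must first show that the $n$-ideal $N_n$ is a genuine two-sided ideal of $L$ --- the analogue of what Proposition \ref{p1} establishes for $R_n$ via the Levi decomposition --- before concluding $N=N_n$.
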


\begin{proof} The proof is similar to the proof of Theorem \ref{t7}.
\end{proof}

Next result establish  properties of weight spaces with respect
to a Leibniz-derivation of a Leibniz algebra.

\begin{lem}\label{l5} Let $L$ be a complex Leibniz algebra
with a given Leibniz-derivation $d$ of order $n$ and $L =
L_{\alpha}\oplus L_{\beta}\oplus \dots \oplus L_{\gamma}$ the
decomposition of $L$ into weight spaces with respect to $d$
(i.e. $L_{\alpha}= \{x \in L: (d-\alpha
1)^k x =0 \ \rm{for \   some} \ k\}$). Then
$$[L_{\alpha_1}, L_{\alpha_2}, \dots ,
L_{\alpha_n}]_r = \left\{\begin{array}{ll} 0 & if \  \alpha_1+\alpha_2 + \dots + \alpha_n  \ is \ not \ a \ root \ of \ d\\
L_{\alpha_1+\alpha_2 + \dots + \alpha_n} & if \ \alpha_1+\alpha_2 + \dots + \alpha_n \  is \ a \ root \ of \ d.\\ \end{array}\right.
$$
\end{lem}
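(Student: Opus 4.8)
The plan is to prove the statement by the standard weight-space argument, using the generalized Leibniz rule (3.1) as the essential computational tool. First I would take elements $x_j \in L_{\alpha_j}$ for $1 \le j \le n$; by definition of the weight space $L_{\alpha_j}$ there is an integer $m_j$ with $(d - \alpha_j I)^{m_j} x_j = 0$. Set $y = [x_1, x_2, \dots, x_n]_r$ and let $\alpha = \alpha_1 + \alpha_2 + \dots + \alpha_n$. The goal is to show $(d - \alpha I)^N y = 0$ for $N$ large enough, which forces $y \in L_\alpha$; then the two cases of the claim follow, since if $\alpha$ is not a root of $d$ the space $L_\alpha$ is zero and hence $y = 0$, and the reverse inclusion $L_\alpha \supseteq [L_{\alpha_1}, \dots, L_{\alpha_n}]_r$ together with the known decomposition shows that for a root one actually lands in $L_\alpha$ (the equality, rather than mere inclusion, would be argued as usual: $L_\alpha$ is spanned by such brackets once one runs over all root tuples summing to $\alpha$, or simply interpreted as the containment together with the statement that $[L_{\alpha_1},\dots,L_{\alpha_n}]_r$ is nonzero only in that component).

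The key computation is to expand $(d - \alpha I)^N [x_1, \dots, x_n]_r$. The cleanest route is to observe that for each fixed factor, $d$ acts on $x_j$ as $\alpha_j I$ plus a nilpotent part $\nu_j := d - \alpha_j I$ restricted to (the $d$-invariant span of) $x_j$. Applying (3.1) with $k = N$, we get
$$(d - \alpha I)^N y = \sum_{i_1 + \dots + i_n = N} \binom{N}{i_1, \dots, i_n} \big[(d-\alpha_1 I)^{i_1}\cdots\big]$$
— more precisely one first writes $d^N y$ via (3.1) and then recombines, or, more transparently, one notes that the operator $D := d - \alpha I$ acting on the bracket satisfies a Leibniz-type rule whose $N$-th power distributes over the $n$ slots so that each slot $j$ receives a factor $(d - \alpha_j I)^{i_j}$ with $i_1 + \dots + i_n = N$. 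Taking $N = m_1 + m_2 + \dots + m_n - n + 1$ guarantees that in every term of the multinomial expansion at least one exponent $i_j$ satisfies $i_j \ge m_j$, so that $(d-\alpha_j I)^{i_j} x_j = 0$ and the whole term vanishes. Hence $(d - \alpha I)^N y = 0$ and $y \in L_\alpha$.

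The main obstacle is verifying that $(d - \alpha I)^N$ genuinely distributes over the $n$-ary bracket in the clean form above, since $d$ itself is only a Leibniz-derivation of order $n$, i.e. the derivation property holds for the $n$-fold right bracket $[-,\dots,-]_r$ but not term-by-term for shorter brackets. What saves the argument is exactly Lemma \ref{derivacion}: formula (3.1) is precisely the statement that $d^k$ distributes over $[-,\dots,-]_r$ with multinomial coefficients, and a short induction (or a binomial-theorem manipulation applied to each scalar shift $\alpha_j$) upgrades this to the shifted operator $d - \alpha I$. Concretely, writing $d = (d - \alpha_1 I) + \alpha_1 I$ in the first slot, $d = (d-\alpha_2 I) + \alpha_2 I$ in the second, and so on, and expanding, one checks that the cross terms reorganize so that $(d - \alpha I)^N$ applied to $[x_1,\dots,x_n]_r$ equals the sum over compositions $i_1 + \dots + i_n = N$ of $\binom{N}{i_1,\dots,i_n}[(d-\alpha_1 I)^{i_1}x_1, \dots, (d-\alpha_n I)^{i_n}x_n]_r$. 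Once this identity is in hand the pigeonhole choice of $N$ finishes the proof; the reverse inclusion and the dichotomy in the statement are then immediate from the weight-space decomposition $L = L_\alpha \oplus L_\beta \oplus \dots \oplus L_\gamma$.
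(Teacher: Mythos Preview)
Your approach is essentially the same as the paper's: both establish the shifted multinomial identity
\[
(d-\alpha I)^{N}[x_1,\dots,x_n]_r=\sum_{i_1+\cdots+i_n=N}\binom{N}{i_1,\dots,i_n}[(d-\alpha_1 I)^{i_1}x_1,\dots,(d-\alpha_n I)^{i_n}x_n]_r
\]
(the paper derives it by first checking the case $N=1$ directly from the Leibniz-derivation property and then inducting, exactly as in Lemma~\ref{derivacion}), and then choose $N$ large enough so that every summand vanishes by pigeonhole. Your choice $N=\sum m_j-n+1$ is the sharp bound; the paper simply takes $N=\sum m_j$, but this is immaterial. Your remark about the ``equality'' versus inclusion is apt: the paper, like you, only proves $[L_{\alpha_1},\dots,L_{\alpha_n}]_r\subseteq L_{\alpha_1+\cdots+\alpha_n}$, which is all that is used later.
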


\begin{proof}
First observe that
$$(d-(\alpha_1+\alpha_2+\cdots+\alpha_n)\cdot 1)[x_1,x_2,\dots,x_n]_r=
$$
$$\sum\limits_{i=1}^n[x_1, \dots, d(x_i), \dots, x_n ]_r -
\sum\limits_{i=1}^n[x_1, \dots, \alpha_i x_i, \dots, x_n ]_r =
\sum\limits_{i=1}^n[x_1, \dots, (d-\alpha_i \cdot 1) x_i, \dots,
x_n ]_r.$$

Similarly to Lemma \ref{derivacion}, by induction on $k$ we
get the following equality:
$$(d-(\alpha_1+\alpha_2+\cdots+\alpha_n)\cdot 1)^k[x_1,x_2,\dots,x_n]_r=
$$
$$=\sum_{i_1+i_2+\cdots+i_n=k}\frac{k!}{i_1!i_2!\cdots i_n!} [(d-\alpha_1
\cdot 1)^{i_1}x_1,(d-\alpha_2 \cdot 1)^{i_2}x_2,\dots,(d-\alpha_n
\cdot 1)^{i_n}x_n]_r\eqno (4.1)$$ for any $x_i\in L_{\alpha_i}$.

Consider $x_i \in L_{\alpha_i}, 1\leq i \leq n.$ Then there exist
natural numbers $k_i$ such that $(d-\alpha_i\cdot 1)^{k_i}(x) =0.$
 Taking $k=\sum\limits_{i=1}^n k_i$ in (4.1), we have
$$(d-(\alpha_1+\alpha_2+\cdots+\alpha_n)\cdot
1)^k[x_1,x_2,\dots,x_n]_r = 0$$ which completes the proof.
\end{proof}

Similarly as in \cite{Moens} we have the existence of an invertible
 Leibniz-derivation of nilpotent Leibniz algebra.

\begin{prop}\label{p5}
Every nilpotent Leibniz algebra with nilindex equal to $s$ has an
invertible Leibniz-derivation of order $[\frac s 2]+1.$
\end{prop}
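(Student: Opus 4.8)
The plan is to exhibit an explicit invertible Leibniz-derivation of order $[\frac s2]+1$ on a nilpotent Leibniz algebra $L$ with $L^{s+1}=0$ and $L^s\neq 0$. I would set $n=[\frac s2]+1$ and, following the model in \cite{Moens}, take the map $d$ which acts as multiplication by the "degree" on the lower central grading: concretely, put $d(x)=kx$ for $x\in L^k\setminus L^{k+1}$, extended so that $d$ is the linear map whose restriction to a subspace complementing $L^{k+1}$ in $L^k$ is $k\cdot\mathrm{id}$. Since $L=V_1\oplus V_2\oplus\dots\oplus V_s$ as a vector space with $L^k=V_k\oplus V_{k+1}\oplus\dots\oplus V_s$, this $d$ is well-defined and diagonalizable with eigenvalues $1,2,\dots,s$, hence invertible (characteristic zero).

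The main step is to check that this $d$ is a Leibniz-derivation of order $n=[\frac s2]+1$, i.e. that
$$d([x_1,x_2,\dots,x_n]_r)=\sum_{i=1}^n[x_1,\dots,d(x_i),\dots,x_n]_r$$
for all $x_i\in L$. By linearity it suffices to verify this when each $x_i$ is homogeneous, say $x_i\in V_{k_i}$. Then $[x_1,\dots,x_n]_r\in L^{k_1+k_2+\dots+k_n}$, and the right-hand side equals $(k_1+k_2+\dots+k_n)[x_1,\dots,x_n]_r$, so the identity I need is $d([x_1,\dots,x_n]_r)=(\sum k_i)[x_1,\dots,x_n]_r$. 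This holds provided $[x_1,\dots,x_n]_r$ lies in a single eigenspace, which it does not in general — but it does hold modulo the ambiguity, because $d$ acts as the scalar $m$ on all of $L^m$ modulo $L^{m+1}$ and $[x_1,\dots,x_n]_r\in L^{k_1+\dots+k_n}$: writing $[x_1,\dots,x_n]_r=\sum_{m\ge k_1+\dots+k_n} v_m$ with $v_m\in V_m$, I will need to argue each such bracket is genuinely homogeneous of degree exactly $\sum k_i$, which fails, so instead I should note that $d$ restricted to $L^m$ differs from $m\cdot\mathrm{id}$ by a map into $L^{m+1}$, and iterate — the cleanest route is to observe $d-m\cdot\mathrm{id}$ is identically zero is false; rather I define $d$ directly by $d|_{V_k}=k\cdot\mathrm{id}$ using the fixed decomposition, and then a homogeneous bracket $[x_1,\dots,x_n]_r$ with $x_i\in V_{k_i}$ need NOT lie in $V_{k_1+\dots+k_n}$. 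So the correct formulation: $L$ is a graded algebra is \emph{not} automatic. Therefore the real content is that one may \emph{choose} a grading; but since an arbitrary nilpotent Leibniz algebra is only filtered, not graded, $d$ as defined by the filtration is the natural derivation of the \emph{associated graded} algebra, and I must check it is still a Leibniz-derivation of order $n$ of $L$ itself. This is exactly where the hypothesis $n=[\frac s2]+1$ enters.

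The key estimate, and the main obstacle, is the following: for homogeneous representatives, $d([x_1,\dots,x_n]_r)-(\sum k_i)[x_1,\dots,x_n]_r$ lies in $L^{(\sum k_i)+1}$, while the defect one picks up from the $n$-ary product landing in higher filtration pieces is controlled because each factor has degree $\ge 1$, so $\sum k_i\ge n$, and any nonzero bracket forces $\sum k_i\le s$, i.e. $n\le s$; the choice $n=[\frac s2]+1$ guarantees that if the bracket $[x_1,\dots,x_n]_r$ is nonzero then all but a bounded amount of "room" in the filtration is used up, and in fact one shows the defect vanishes identically. Concretely I expect the argument to run: pick a homogeneous basis adapted to the filtration, extend $d$ by $d(e)=\deg(e)\cdot e$; then for a product of $n\ge[\frac s2]+1$ basis elements, the Leibniz-derivation identity reduces to the statement $\deg([e_{i_1},\dots,e_{i_n}]_r)=\sum_j\deg(e_{i_j})$ whenever the left side is nonzero, which holds because the left side is a sum of basis elements each of degree $\ge\sum_j\deg(e_{i_j})$ and of degree $\le s<2(\text{anything})$ forcing homogeneity. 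I would then conclude that $d$ is an invertible Leibniz-derivation of order $[\frac s2]+1$, finishing the proof. The delicate point to get right is precisely the degree/homogeneity bookkeeping that makes the defect term vanish under the hypothesis on $n$; everything else is routine linear algebra.
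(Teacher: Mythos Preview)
Your approach has a genuine gap: the ``full grading'' map $d$ (defined by $d|_{V_k}=k\cdot\mathrm{id}$ where $V_k$ is a complement of $L^{k+1}$ in $L^k$) is \emph{not} in general a Leibniz-derivation of order $q=[\tfrac{s}{2}]+1$. You correctly identify that the algebra is only filtered, not graded, and that the defect is whether $[x_1,\dots,x_q]_r$ with $x_i\in V_{k_i}$ actually lies in $V_{\sum k_i}$; but your attempted resolution (``degree $\le s<2(\text{anything})$ forcing homogeneity'') is not a valid argument and in fact the conclusion is false. For a concrete counterexample, take the algebra of Example~\ref{excarnil} with $n=6$: here $s=6$, $q=4$, and the natural choice of complements gives $V_1=\langle e_1,e_2\rangle$, $V_2=\langle e_3\rangle$, $V_3=\langle e_4\rangle$, $V_4=\langle e_5\rangle$, $V_5=\langle e_6\rangle$. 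With $x_1=e_1$, $x_2=e_2$, $x_3=x_4=e_1$ (all in $V_1$, so $\sum k_i=4$) one computes $[e_1,e_2,e_1,e_1]_r=[[[e_1,e_2],e_1],e_1]=[[e_4,e_1],e_1]=e_6\in V_5$, so the left side of the identity is $d(e_6)=5e_6$ while the right side is $4e_6$. Thus your $d$ fails the order-$4$ Leibniz-derivation identity.

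The paper's construction avoids this by using only \emph{two} eigenvalues rather than $s$. One sets $q=[\tfrac{s}{2}]+1$, chooses a vector-space complement $W$ to $L^q$ in $L$, and defines $P$ to be the identity on $W$ and $q\cdot\mathrm{id}$ on $L^q$. The verification is then immediate by multilinearity: if all $x_i\in W$ then $[x_1,\dots,x_q]_r\in L^q$, so both sides equal $q[x_1,\dots,x_q]_r$; if some $x_j\in L^q$ then every term on both sides lies in $L^{2q-1}\subseteq L^s=0$ (this is exactly where $q=[\tfrac{s}{2}]+1$ is used, since $2q-1\ge s$). The point is that collapsing the filtration to a two-step one removes the homogeneity obstruction you ran into.
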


\begin{proof} Let $L$ be a Leibniz algebra with nilindex equal to $s$ and set $q=[\frac s 2]+1.$
Consider the vector subspace $W$ of $L$ complementary to $L^{q},$
i.e. $L = W + L^{q}$. Define the map $P$ by the following way:
$$P(x) = \left\{\begin{array}{ll} x & if \ x\in W,\\  qx & if \ x\in L^{q}.\\ \end{array}\right.$$
It is easy to check that $P$ is a Leibniz-derivation for $L$ of order $q.$
\end{proof}

Below we present one of the main theorems of the paper.

\begin{thm}\label{t6}
Let $L$ be a complex Leibniz algebra which admits an invertible
Leibniz-derivation. Then $L$ is nilpotent.
\end{thm}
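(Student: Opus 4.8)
The plan is to reduce the problem to the case where $L$ is solvable, and then to combine the weight-space decomposition with respect to the Leibniz-derivation with a triangularization argument. Let $d$ be an invertible Leibniz-derivation of order $n$ of $L$. By Theorem \ref{t7} the solvable radical $R$ satisfies $d(R)\subseteq R$, and since $d$ is an invertible linear map of the finite-dimensional space $L$, this in fact forces $d(R)=R$; hence $d$ induces an invertible linear map $\bar d$ on $L/R$. Since $R$ is an $n$-ideal (Proposition \ref{p1}), the bracket $[-,\dots,-]_r$ descends to $L/R$ and $\bar d$ is an order-$n$ Leibniz-derivation of $L/R$. By Levi's theorem (Theorem \ref{tbar}), $L/R$ is isomorphic as a Leibniz algebra to the semisimple Lie algebra $S$, and for a Lie algebra an order-$n$ Leibniz-derivation in the sense of Definition \ref{d2} is the same thing as one in the sense of Definition \ref{d1} (the earlier Proposition). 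Thus Theorem \ref{t5} applies to $S$ and forces $S$ to be nilpotent; a nilpotent semisimple Lie algebra is zero, so $S=0$ and $L=R$ is solvable. The checks that $\bar d$ is well defined and is a Leibniz-derivation of the quotient are routine.

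Now assume $L$ is solvable, and let $L=\bigoplus_{\alpha\in\Gamma}L_{\alpha}$ be the decomposition into weight spaces of $d$ furnished by Lemma \ref{l5}; since $d$ is invertible, $0\notin\Gamma$. The first substantive point is that $R_x$ is nilpotent for every homogeneous $x$. Indeed, for $x\in L_{\alpha}$ and $z\in L_{\gamma}$ one has $R_x^{n-1}(z)=[z,x,\dots,x]_r\in L_{\gamma+(n-1)\alpha}$ (with $n-1$ copies of $x$) by Lemma \ref{l5}, and iterating, $R_x^{(n-1)m}(L_{\gamma})\subseteq L_{\gamma+m(n-1)\alpha}$ for every $m$. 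Since $(n-1)\alpha\neq 0$ and $\Gamma$ is finite, $\gamma+m(n-1)\alpha$ lies outside $\Gamma$ for all large $m$, so a fixed power of $R_x$ annihilates every weight space, i.e. $R_x$ is nilpotent.

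Finally I would upgrade this to all of $L$. By identity (2.1), $x\mapsto R_x$ is, up to sign, a homomorphism of Leibniz algebras onto $R(L)$, so $R(L)$ is a solvable Lie algebra of linear transformations of the finite-dimensional complex space $L$. By Lie's theorem there is a complete $R(L)$-stable flag of $L$, so in an adapted basis each $R_x$ is upper triangular with diagonal entries $\lambda_1(x),\dots,\lambda_{\dim L}(x)$, where the $\lambda_i\colon L\to\mathbb{C}$ are linear. By the previous paragraph each $\lambda_i$ vanishes on every $L_{\alpha}$, hence on all of $L$ by linearity, so $R_x$ is nilpotent for every $x\in L$; by Theorem \ref{t1}, $L$ is nilpotent. (Alternatively, once all $R_x$ are nilpotent one may apply Theorem \ref{closed} to the weakly closed set $\{R_x:x\in L\}$ to conclude that $R(L)^{*}$, and hence $L$, is nilpotent.) I expect this last step to be the main obstacle: a Leibniz-derivation of order $n$ controls only the $n$-ary bracket $[-,\dots,-]_r$ and not the binary one, so the weight decomposition is not a grading of the Leibniz algebra and one cannot read off nilpotency of a general $R_x$ from it — it is precisely the reduction to the solvable case that makes Lie's theorem applicable, since a Lie algebra merely spanned by nilpotent operators need not consist of nilpotent operators.
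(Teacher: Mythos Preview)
Your argument is correct, and it follows a genuinely different route from the paper's. Both proofs share the weight-space decomposition $L=\bigoplus_\alpha L_\alpha$ with respect to $d$ and the observation (via Lemma~\ref{l5} and invertibility of $d$) that $R_x$ is nilpotent for every \emph{homogeneous} $x$. The divergence is in how one passes from ``nilpotent on each $L_\alpha$'' to ``nilpotent on all of $L$''. The paper does this in one stroke: it asserts that $\bigcup_i R(L_{\rho_i})$ is a weakly closed subset of $\mathrm{End}(L)$ (using identity~(2.1) together with Lemma~\ref{l5}) and invokes Jacobson's Theorem~\ref{closed} to conclude that the enveloping associative algebra, hence every $R_x$, is nilpotent; no preliminary reduction to the solvable case is made. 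You instead first quotient by the radical (using Theorem~\ref{t7} and Levi) and appeal to Moens' Lie-algebra result (Theorem~\ref{t5}) to kill the semisimple part, and then, knowing $L$ is solvable, apply Lie's theorem to triangularize $R(L)$ and read off that the diagonal weights vanish since they are linear functionals annihilating each $L_\alpha$.

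Your route is a bit longer and imports an external theorem (Theorem~\ref{t5}) that the paper is trying to generalize, but it has the virtue of using only the most classical tools (Levi, Lie's theorem, Engel) once the reduction is made, and it squarely addresses the point you yourself flag: the weight decomposition is not a grading of the binary bracket, so one needs an extra structural ingredient---for you, solvability and Lie's theorem; for the paper, weak closedness and Theorem~\ref{closed}. Your parenthetical about applying Theorem~\ref{closed} to $\{R_x:x\in L\}$ \emph{after} all $R_x$ are known to be nilpotent is superfluous, since at that point Engel's theorem already finishes; the paper's use of Theorem~\ref{closed} is earlier and does real work.
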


\begin{proof}
Let $d$ be an invertible Leibniz-derivation of order $n$ of the Leibniz
algebra $L$ and $$L=L_{\rho_1}\oplus L_{\rho_2}\oplus\dots\oplus
L_{\rho_s}$$ be the decomposition of $L$ into characteristic spaces
with respect to $d.$

Let $\alpha,\beta \in spec(d).$ Then by Lemma \ref{l5} we have
$$[L_{\alpha},\underbrace{ L_{\beta}, L_{\beta}, \dots, L_{\beta}
]_r}_{n-1-\textit{times}}= [\dots[[L_{\alpha},\underbrace{
L_{\beta}], L_{\beta},] \dots, L_{\beta} ]}_{n-1-\textit{times}}
\subseteq L_{\alpha+(n-1)\beta}.$$ Considering $k$-times of the
$n$-ary multiplication we have
$$[\dots[[L_{\alpha},\underbrace{ \underbrace{L_{\beta}, L_{\beta}, \dots, L_{\beta}
]_r}_{n-1 - \textit{times}}, \underbrace{L_{\beta}, L_{\beta},
\dots, L_{\beta}]_r}_{n-1 - \textit{times}}, \dots,
\underbrace{L_{\beta}, L_{\beta}, \dots, L_{\beta}]_r}_{n-1 -
\textit{times}}}_{k-\textit{times}}= $$
$$= [\dots[[L_{\alpha},\underbrace{
L_{\beta}], L_{\beta},] \dots, L_{\beta}
]}_{k(n-1)-\textit{times}} \subseteq L_{\alpha+k(n-1)\beta}.$$

Since for sufficiently large $k\in \mathbb{N}$ we obtain
$\alpha+k(n-1)\beta\not \in spec(d),$ by Lemma \ref{l5} we obtain
$[\dots[[L_{\alpha},\underbrace{ L_{\beta}], L_{\beta},] \dots,
L_{\beta} ]}_{k(n-1)-\textit{times}}=0.$

Thus, any operator of right multiplication $R_x: L \rightarrow L$,
where $x\in L_{\beta}$, is nilpotent and, due to the fact that
$\alpha, \beta$ were taken arbitrary, it follows that every
operator from $\bigcup_{i=1}^kR(L_{\rho_i})$ is nilpotent.

Now from identity $(2.1)$ and Lemma \ref{l5} it follows that
$\bigcup_{i=1}^kR(L_{\rho_i})$ is a weekly closed set of an
associative algebra $End(L).$ Hence, by Theorem \ref{closed} it
follows that every operator from $R(L)$ is nilpotent.

Hence, $R_x$ is nilpotent for any $x \in L.$ Now by Engel's
Theorem (Theorem \ref{t1}) we conclude that $L$ is
nilpotent.\end{proof}

Finally from the Theorem \ref{t6} and Proposition \ref{p5} we get
the analogue of Theorem \ref{t5} for Leibniz algebras.

\begin{thm} A Leibniz algebra over a field of characteristic zero is nilpotent
if and only if it has an invertible Leibniz-derivation.
\end{thm}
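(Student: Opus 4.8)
This statement is the synthesis of the two results proved above, and the plan is simply to assemble them, adding a routine base-change argument so that the conclusion holds over an arbitrary field of characteristic zero and not only over $\mathbb{C}$. The implication ``nilpotent $\Rightarrow$ existence of an invertible Leibniz-derivation'' requires nothing new: if $L$ is nilpotent of nilindex $s$, Proposition \ref{p5} produces an explicit invertible Leibniz-derivation of order $[\frac s 2]+1$, and that construction is valid over any field of characteristic zero.

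For the converse, suppose $L$ over a field $F$ of characteristic zero admits an invertible Leibniz-derivation $d$ of some order $n$. First I would reduce to the algebraically closed case: passing to $\bar F$ and setting $L_{\bar F}=L\otimes_F\bar F$, the endomorphism $d\otimes 1$ still satisfies Definition \ref{d2} for the same order $n$ and is still invertible, and one has $(L_{\bar F})^k=L^k\otimes_F\bar F$, so $L$ is nilpotent if and only if $L_{\bar F}$ is. Thus it suffices to show that $L_{\bar F}$ is nilpotent. Now the proof of Theorem \ref{t6} goes through over $\bar F$ word for word: the only appeal to the ground field is the decomposition $L_{\bar F}=L_{\rho_1}\oplus\dots\oplus L_{\rho_s}$ into characteristic subspaces of $d\otimes 1$, which is available over any algebraically closed field since all eigenvalues of $d\otimes 1$ lie in $\bar F$. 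From there, Lemma \ref{l5} forces $[\dots[[L_{\alpha},L_{\beta}],L_{\beta}],\dots,L_{\beta}]$ (with $k(n-1)$ copies of $L_{\beta}$) to vanish once $\alpha+k(n-1)\beta\notin spec(d\otimes 1)$, so each $R_x$ with $x$ in a characteristic subspace is nilpotent; identity $(2.1)$ and Lemma \ref{l5} make $\bigcup_i R(L_{\rho_i})$ a weakly closed set of nilpotent operators in $End(L_{\bar F})$, whence Theorem \ref{closed} gives that every element of $R(L_{\bar F})$ is nilpotent, and Engel's theorem (Theorem \ref{t1}) yields nilpotency of $L_{\bar F}$.

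I do not expect a genuine obstacle here; the one place deserving a line of verification is the reduction step, namely that the Leibniz-derivation identity of Definition \ref{d2} and the invertibility of $d$ survive tensoring with $\bar F$ and that nilpotency of $L_{\bar F}$ descends to $L$ --- all of which are standard facts about extension of scalars for (non-associative) algebras.
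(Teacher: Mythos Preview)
Your proposal is correct and follows the same approach as the paper, which simply cites Proposition \ref{p5} and Theorem \ref{t6} without further argument. You are in fact more careful than the paper: Theorem \ref{t6} is stated only over $\mathbb{C}$, while the final theorem is asserted over an arbitrary field of characteristic zero, and your base-change step to $\bar F$ supplies the bridge the paper leaves implicit.
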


\end{document}